\documentclass[12pt]{amsart}
\usepackage{fullpage}
\usepackage[all]{xy}
\usepackage{amscd, amssymb, amsmath, amsthm, graphics}
\usepackage{amsmath,amsfonts,amsthm,amssymb}
\usepackage{latexsym,amsmath}
\usepackage{graphicx,psfrag}
\usepackage{pinlabel}
\usepackage{mathrsfs}
\usepackage{xcolor}
\usepackage{hyperref}
\usepackage{lmodern}
\usepackage[T1]{fontenc}

\pagestyle{headings}
\setlength{\headheight}{6.5pt}
\setlength{\headsep}{0.5cm}

\newtheorem{theorem}{Theorem}[section]

\newtheorem{corollary}[theorem]{Corollary}
\newtheorem{lemma}[theorem]{Lemma}

\theoremstyle{definition}
\newtheorem{definition}[theorem]{Definition}

\newtheorem{problem}[theorem]{Problem}

\theoremstyle{remark}
\newtheorem{remark}[theorem]{Remark}

\numberwithin{equation}{section}
 \newcommand{\bZ}{\mathbb Z}
 \newcommand{\bR}{\mathbb R}
 \newcommand{\bN}{\mathbb N}
 \newcommand{\bC}{\mathbb C}
 
 \newcommand{\bH}{\mathbb H}

  \makeatletter
\@namedef{subjclassname@2020}{\textup{2020} Mathematics Subject Classification}
\makeatother
  \dedicatory{Dedicated to Shicheng Wang on the occasion of his 70th birthday}
  
\begin{document}
\sloppy

\title[On $\pi_1$-injectivity of self-maps in low dimensions]
{On $\pi_1$-injectivity of self-maps in low dimensions}

\author{Christoforos Neofytidis }
\address{Department of Mathematics and Statistics, University of Cyprus, Nicosia 1678, Cyprus}
\email{neofytidis.christoforos@ucy.ac.cy}

\subjclass[2020]{55M25, 57M10, 57M50}
\keywords{}

\date{\today}

\begin{abstract}
We show that all self-maps of non-zero degree of $3$-manifolds not covered by $S^3$ and of Thurston geometric $4$-manifolds and their connected sums not covered by $N\#(\#_{p\geq0}S^2\times S^2)\#(\#_{q\geq0}\bC P^2)$, where $N$ is an $S^2\times\mathbb X^2$ or $S^3\times\mathbb R$ manifold, are $\pi_1$-injective. We thus determine when these maps induce $\pi_1$-isomorphisms. The results in dimension three were previously established by Shicheng Wang. We give a uniform group theoretic proof in all cases based only on the residual finiteness of the fundamental groups for the $\pi_1$-injectivity and then only on numerical invariants for the $\pi_1$-isomorphisms. 
\end{abstract}

\maketitle
\vspace{-.5cm}

\section{Introduction}

The existence of a map of non-zero degree $f\colon M\to N$, where $M$ and $N$ are closed oriented $n$-manifolds, induces a homomorphism $f_*\colon \pi_1(M)\to \pi_1(N)$ which has image of finite index in $\pi_1(N)$. In particular, when the degree is $\deg(f)=\pm 1$,  the homomorphism $f_*$ is surjective, and we say that $f$ is {\em $\pi_1$-surjective}. In the case of self-maps, i.e., when $M=N$, an open question attributed to H. Hopf asks whether the assumption of having degree $\pm 1$ implies that $f$ is a homotopy equivalence:

\begin{problem}[Hopf]\cite[Problem 5.26]{Kir}\label{p:Hopf}
Let $M$ be a closed oriented manifold. Is every self-map $f\colon M\to M$ with $|\deg(f)|=1$ a homotopy equivalence?
\end{problem}

Clearly, a first step in understanding Hopf's problem is to determine whether $f$ is {\em $\pi_1$-injective} whenever $\deg(f)=\pm1$.  In dimension three, a notable result of S. Wang~\cite{Wa} says that $\pi_1$-injectivity holds for self-maps of non-zero degree of most $3$-manifolds without the strong constraint of having degree $\pm1$:

\begin{theorem}\cite[Theorem 0.1]{Wa}\label{t:Wang}
Suppose $M$ is a closed oriented $3$-manifold which is not covered by $S^3$. Then any non-zero degree map $f\colon M\to M$ is $\pi_1$-injective.
\end{theorem}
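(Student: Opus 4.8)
The plan is to work entirely with the induced endomorphism $\phi := f_*\colon G\to G$, where $G=\pi_1(M)$, and to show that $\ker\phi=1$. Two inputs are available from the outset. First, $G$ is finitely generated and residually finite, the latter being a consequence of geometrization (Hempel). Second, since $\deg f\neq 0$, the image $\phi(G)$ has finite index in $G$: if $[G:\phi(G)]$ were infinite, the associated cover $\widetilde M\to M$ would be noncompact, $f$ would lift to $\widetilde f\colon M\to\widetilde M$, and then $f_*[M]=p_*\widetilde f_*[M]=0$ because $H_3(\widetilde M)=0$, contradicting $\deg f\neq 0$; in fact $m:=[G:\phi(G)]$ divides $\deg f$. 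Thus the whole theorem reduces to a statement about endomorphisms of finitely generated residually finite groups with finite-index image, together with the structural fact that $G$ has no nontrivial finite normal subgroup.

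The crux is the purely group-theoretic claim that \emph{if $G$ is finitely generated and residually finite and $\phi\colon G\to G$ has $[G:\phi(G)]=m<\infty$, then $\ker\phi$ is finite}. To prove it I would use the characteristic finite-index subgroups $G_n:=\bigcap\{U\le G:[G:U]\le n\}$, which satisfy $\bigcap_n G_n=1$ by residual finiteness. Since $[G:\phi^{-1}(U)]\le[G:U]$ for every subgroup $U$, each $G_n$ obeys $\phi(G_n)\subseteq G_n$, so $\phi$ descends to $\overline\phi_n\colon G/G_n\to G/G_n$. For $n\ge m$ the index-$m$ subgroup $\phi(G)$ already contains $G_n$, so the image of $\overline\phi_n$ has index exactly $m$ and its kernel has order exactly $m$. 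As $\ker\phi$ maps into $\ker\overline\phi_n$, this forces $|\ker\phi/(\ker\phi\cap G_n)|\le m$ for all $n\ge m$. The quotients $\ker\phi/(\ker\phi\cap G_n)$ then form an inverse system with surjective bonding maps and orders bounded by $m$, so their orders stabilize; since $\ker\phi$ embeds in the inverse limit (again by $\bigcap_n G_n=1$), we conclude $|\ker\phi|\le m$. This step, and in particular the point that the index of the image is \emph{exactly} $m$ on every sufficiently fine finite quotient, is where residual finiteness does the real work, and it is the part I expect to require the most care.

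Finally I would eliminate the finite kernel using the hypothesis that $M$ is not covered by $S^3$, i.e.\ that $G$ is infinite. Now $\ker\phi$ is a finite \emph{normal} subgroup of $G$, and an infinite fundamental group of a closed orientable $3$-manifold has none nontrivial: in the irreducible case such an $M$ is aspherical, so $G$ is torsion-free; in the reducible case $G$ is a nontrivial free product, and by the Kurosh subgroup theorem a finite subgroup is conjugate into a free factor, which is incompatible with normality unless it is trivial. Hence $\ker\phi=1$, so $\phi=f_*$ is injective and $f$ is $\pi_1$-injective. The main obstacle is the finiteness-of-kernel lemma of the previous paragraph; once $M$ is known not to be covered by $S^3$, the passage from finite to trivial kernel is routine $3$-manifold topology.
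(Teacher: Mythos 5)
Your proof is correct, and it follows the same overall strategy as the paper's: residual finiteness of $\pi_1(M)$ via Hempel, finiteness of $\ker f_*$ for an endomorphism with finite-index image, and then the observation that an infinite closed orientable $3$-manifold group has no nontrivial finite normal subgroup. Where you genuinely diverge is at the group-theoretic heart. The paper cites Hirshon's theorem (an endomorphism $\varphi$ of a finitely generated residually finite group with $[G:\varphi(G)]<\infty$ is injective on $\varphi^n(G)$ for some $n$) and deduces finiteness of the kernel as Corollary \ref{c:Hirshon}; you instead prove the finite-kernel statement from scratch, using the characteristic finite-index subgroups $G_n=\bigcap\{U\le G : [G:U]\le n\}$, the inclusion $\phi(G_n)\subseteq G_n$, and counting in the finite quotients $G/G_n$. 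That argument is sound: the pivot is exactly as you say, namely that for $n\ge m$ the subgroup $\phi(G)$ contains $G_n$, so the image of $\overline{\phi}_n$ has index exactly $m$ and hence $|\ker\overline{\phi}_n|=m$, which bounds $|\ker\phi/(\ker\phi\cap G_n)|$ uniformly; residual finiteness then promotes this to $|\ker\phi|\le m$. What your route buys is self-containedness — the theorem needs nothing beyond Hempel plus elementary finite-quotient counting — while the paper's route buys brevity and access to Hirshon's stronger iterated-image statement. (You also spell out the standard covering-space proof that $\deg f\neq 0$ forces $[G:\phi(G)]<\infty$, which the paper merely asserts.) The endgame is sliced slightly differently as well: the paper splits according to whether the prime decomposition contains spherical summands (torsion-free fundamental group versus nontrivial free product), whereas you split irreducible versus reducible and invoke asphericity, respectively Kurosh. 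One small imprecision there: a reducible closed orientable $3$-manifold need not have $\pi_1$ a nontrivial free product, since $S^2\times S^1$ is reducible but prime with $\pi_1\cong\bZ$; as $\bZ$ is torsion-free the conclusion still holds in that case, so this is a one-line fix rather than a gap.
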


For $3$-manifolds with non-vanishing hyperbolic (simplicial) volume, Wang's proof relied on the residual finiteness of fundamental groups of $3$-manifolds. For the remaining cases, Wang provided a case-by-case analysis essentially by following Thurston's geometrisation picture.  One of the goals of this paper is to indicate that residual finiteness alone suffices in all cases of Theorem \ref{t:Wang}. We will prove the following result which encompasses Theorem \ref{t:Wang}, as well as the analogous picture in four dimensions:

\begin{theorem}\label{t:main}
Let $M$ be a closed oriented manifold which belongs to one of the following two classes
\begin{itemize}
\item[(i)] all $3$-manifolds that are not covered by $S^3$;
\item[(ii)] all Thurston geometric $4$-manifolds and their connected sums that are not covered by  $N\#(\#_{p\geq0}S^2\times S^2)\#(\#_{q\geq0}\bC P^2)$, where $N$ is modeled on one of the geometries $S^3\times \mathbb R$ or $S^2\times\mathbb X^2$.
\end{itemize}
Then any non-zero degree map $f\colon M\to M$ is $\pi_1$-injective.
\end{theorem}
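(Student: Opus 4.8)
The plan is to exploit the residual finiteness of the fundamental groups throughout, reducing everything to a single uniform group-theoretic principle. Let me first isolate the key algebraic mechanism. If $f\colon M\to M$ has $\deg(f)\neq 0$, then $f_*\colon \pi_1(M)\to\pi_1(M)$ has image of finite index, say index $k$. The core observation is this: for a finitely generated residually finite group $G$, any endomorphism $\varphi\colon G\to G$ whose image has finite index must be injective. This is a classical fact (essentially due to Malcev / Hirshon): a surjective endomorphism of a Hopfian group is an automorphism, and finitely generated residually finite groups are Hopfian, so combined with the finite-index image one obtains injectivity. So the heart of the matter is to verify that, in every case covered by Theorem~\ref{t:main}, the group $\pi_1(M)$ is finitely generated and residually finite, and that the image $f_*(\pi_1(M))$ genuinely has finite index.

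\textbf{Reduction to a Hopficity statement.} I would phrase the engine as a lemma: \emph{if $G$ is finitely generated and residually finite, and $\varphi\colon G\to G$ is an endomorphism with $[G:\varphi(G)]<\infty$, then $\varphi$ is injective.} To prove it, restrict $\varphi$ to the finite-index subgroup $H=\varphi(G)$; iterate to get a descending chain, and use that a finite-index subgroup of a finitely generated residually finite group is again finitely generated and residually finite, hence Hopfian. The standard argument shows $\ker\varphi$ must be trivial because otherwise one produces infinitely many finite quotients of decreasing size, contradicting residual finiteness together with the finite-index constraint. This step is routine once the hypotheses are in place.

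\textbf{The case analysis is then about verifying the hypotheses.} For part (i), the finite index of the image is standard for any nonzero-degree self-map of a closed oriented manifold, finite generation of $3$-manifold groups is classical, and residual finiteness of $3$-manifold groups follows from geometrization together with Hempel's theorem (and the Agol/Wise developments for the hyperbolic pieces) — but crucially, Theorem~\ref{t:Wang} tells us the result already, so in the uniform proof I would cite residual finiteness of all $3$-manifold groups not covered by $S^3$. For part (ii), I would run through Thurston's four-dimensional geometries and their connected sums, checking that outside the excluded class $N\#(\#S^2\times S^2)\#(\#\mathbb{C}P^2)$ the fundamental group is residually finite; here the connected-sum decomposition means $\pi_1$ is a free product of the $\pi_1$ of the summands, and residual finiteness is preserved under free products. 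The excluded summands are exactly the ones with finite $\pi_1$ (spherical-type pieces like $S^3\times\mathbb{R}$ and $S^2\times\mathbb{X}^2$ whose simply-connected or finite-$\pi_1$ behavior, together with the simply connected $S^2\times S^2$ and $\mathbb{C}P^2$) that destroy the finite-index-image argument — precisely because a nonzero-degree self-map need not be $\pi_1$-surjective when a factor has torsion or trivial fundamental group.

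\textbf{The main obstacle} I anticipate is not the algebraic engine but the bookkeeping in part (ii): confirming that every Thurston geometric $4$-manifold group outside the excluded list is residually finite, and more delicately, confirming that the finite-index-image claim survives under connected sums. For a connected sum the degree of a self-map does \emph{not} directly force $f_*$ to have finite index in each free factor, so I would need to invoke the Kneser--Milnor / Grushko structure of the free product and argue that a nonzero-degree map respects the factorization up to conjugation and permutation (using that the nontrivial factors are aspherical or have infinite residually finite $\pi_1$), thereby reducing $\pi_1$-injectivity on the whole group to injectivity on each factor where the geometric engine applies. Pinning down exactly why the excluded class is the obstruction — i.e., that allowing an $S^2\times S^2$ or $\mathbb{C}P^2$ summand can produce a degree-$\pm1$ self-map that collapses a free factor — is the subtle point that justifies the precise statement of the theorem.
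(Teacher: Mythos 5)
Your core algebraic engine is false as stated, and this is a fatal gap. You claim that for a finitely generated residually finite group $G$, any endomorphism $\varphi\colon G\to G$ with $[G:\varphi(G)]<\infty$ is injective, and you propose to deduce this from Hopficity. Counterexample: $G=\mathbb{Z}_2\times\mathbb{Z}$ (finitely generated, residually finite) and $\varphi(a,n)=(0,n)$; the image $\{0\}\times\mathbb{Z}$ has index $2$, yet $\ker\varphi=\mathbb{Z}_2\times\{0\}\neq 1$. The Hopficity argument is a non sequitur: Hopficity concerns \emph{surjective} endomorphisms, and an endomorphism with finite-index image is not surjective unless that index is $1$; no descending-chain refinement can rescue the statement, as the counterexample shows. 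What is true --- and what the paper actually uses (Hirshon's theorem, Theorem \ref{t:Hirshon}) --- is the weaker conclusion that $\varphi$ is injective on $\varphi^n(G)$ for some $n$, whence $\ker\varphi$ is \emph{finite}, since $|\ker\varphi|=[\ker\varphi:\ker\varphi\cap\varphi^n(G)]\leq[G:\varphi^n(G)]<\infty$. Upgrading a finite kernel to a trivial one needs an extra hypothesis, and this is exactly where the paper's case division comes from: either $\pi_1(M)$ is torsion-free (no spherical-type summands in the prime decomposition), or $\pi_1(M)$ is a non-trivial free product, which contains no non-trivial finite normal subgroup. The falsity of your lemma is also precisely why the theorem excludes manifolds covered by $N\#(\#_{p\geq0}S^2\times S^2)\#(\#_{q\geq0}\mathbb{C}P^2)$ with $N$ an $S^3\times\mathbb{R}$ or $S^2\times\mathbb{X}^2$ manifold: for instance $\mathbb{RP}^3\times S^1$ is an $S^3\times\mathbb{R}$ manifold with $\pi_1\cong\mathbb{Z}_2\times\mathbb{Z}$, and the product of the degree-$2$ map $\mathbb{RP}^3\to S^3\to \mathbb{RP}^3$ with $\mathrm{id}_{S^1}$ realizes exactly the counterexample endomorphism. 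Your diagnosis of the exclusion is correspondingly off: the image of $f_*$ has finite index for \emph{any} non-zero degree map between closed oriented manifolds, torsion or not; the obstruction is torsion in the kernel, not failure of the finite-index property.

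Your anticipated ``main obstacle'' --- Kneser--Milnor/Grushko bookkeeping to control how $f_*$ interacts with the free factorization, and to get finite index in each free factor --- is a detour the paper never needs. One applies the (corrected) group-theoretic statement to the whole group $\pi_1(M)=\pi_1(M_1)\ast\pi_1(M_2)$ at once: it is finitely generated and residually finite (residual finiteness of each geometric summand plus Gruenberg's theorem, Theorem \ref{t:Gru}), the image of $f_*$ has finite index in the whole group, so the kernel is finite by Hirshon's theorem; and since a non-trivial free product has no non-trivial finite normal subgroup, the kernel is trivial. No factorwise injectivity and no analysis of whether the map respects the connected-sum structure is required.
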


Subsequently to Theorem \ref{t:Wang}, Wang determined when actually the maps of Theorem \ref{t:Wang} are $\pi_1$-isomorphisms:

\begin{corollary}\cite[Corollary 0.2]{Wa}\label{c2:Wang}
Suppose $M$ is a closed oriented $3$-manifold. Then any non-zero degree map $f\colon M\to M$ induces a $\pi_1$-isomorphism unless $M$ is covered by either a $T^2$-bundle over $S^1$, or $\Sigma\times S^1$ for some surface $\Sigma$, or $S^3$.
\end{corollary}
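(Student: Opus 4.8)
The plan is to reduce the statement to a co-Hopficity question and then detect any failure of surjectivity by a homotopy-invariant, covering-multiplicative numerical invariant. First observe that among the eight Thurston geometries the only non-aspherical ones are $S^3$ and $S^2\times\bR$, and that the manifolds modelled on these -- together with the flat, Nil, Sol and $\bH^2\times\bR$ ones -- are exactly those finitely covered by $S^3$, by a $T^2$-bundle over $S^1$, or by $\Sigma\times S^1$ (e.g. flat manifolds are covered by $T^3=T^2\times S^1$, Nil and Sol by torus bundles, $\bH^2\times\bR$ by $\Sigma\times S^1$, and $S^2\times\bR$ by $S^2\times S^1$). Since a finite cover of a geometric manifold is geometric, every $M$ outside the exceptional list is a closed aspherical $3$-manifold not covered by $S^3$, so Theorem~\ref{t:main} already makes $f_*$ injective. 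Thus I only need surjectivity of $f_*$, i.e. that $\pi_1(M)$ is co-Hopfian.

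Set $k=[\pi_1(M):f_*\pi_1(M)]$, which is finite because $\deg(f)\neq0$, and let $p\colon\widehat M\to M$ be the $k$-fold cover corresponding to the subgroup $f_*\pi_1(M)$. By the lifting criterion $f=p\circ\widehat f$, and comparing images shows that $\widehat f_*\colon\pi_1(M)\to\pi_1(\widehat M)$ is an isomorphism; as both manifolds are aspherical, Whitehead's theorem makes $\widehat f$ a homotopy equivalence. In other words, $M$ is homotopy equivalent to its own $k$-fold cover $\widehat M$. Now if $I$ is any numerical invariant that is simultaneously a homotopy invariant and multiplicative under finite covers, then $I(\widehat M)=I(M)$ and $I(\widehat M)=k\,I(M)$, so $(k-1)\,I(M)=0$; hence $I(M)\neq0$ forces $k=1$ and $f_*$ onto. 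It therefore suffices to produce, for each non-exceptional $M$, such an invariant that is nonzero.

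The Gromov simplicial volume $\|\cdot\|$ is homotopy-invariant with $\|\widehat M\|=k\|M\|$, and $\|M\|>0$ precisely when the geometric decomposition of $M$ contains a hyperbolic piece; this disposes at once of the hyperbolic and ``mixed'' manifolds. For $M$ modelled on $\widetilde{SL}_2$ the simplicial volume vanishes, and I would instead invoke the Seifert volume of Brooks--Goldman, which enjoys the same two properties and is positive exactly in $\widetilde{SL}_2$-geometry; note that $\bH^2\times\bR$ (Seifert fibered with vanishing Euler number over a hyperbolic base) does not intervene here, being already excluded as covered by $\Sigma\times S^1$. Finally, for the reverse implication one checks that the listed families really are exceptions: fibrewise multiplication of degree $d>1$ on the torus fibres of a $T^2$-bundle, or on the $S^1$-factor of $\Sigma\times S^1$, gives self-maps with injective but non-surjective $f_*$, while a spherical space form has finite $\pi_1$ admitting non-zero degree self-maps with non-injective (hence non-iso) $f_*$.

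The genuine obstacle is the class of non-geometric graph manifolds, all of whose pieces are Seifert: here both $\|\cdot\|$ and the Seifert volume vanish, yet $M$ is not covered by a product or a torus bundle, so the corollary still asserts co-Hopficity. For these I would argue along the JSJ decomposition: since the canonical tori are preserved up to isotopy, the homotopy equivalence $\widehat f$ must match the JSJ graphs of $M$ and $\widehat M$ and restrict to fibre-compatible self-covers of the Seifert pieces, and I would then use that the rational Euler numbers of the pieces together with the gluing matrices are rigid enough to rule out a proper cover unless the pieces fibre coherently over all of $M$ -- that is, unless $M$ is itself covered by $\Sigma\times S^1$ or a $T^2$-bundle, which is excluded. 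This is the step I expect to be the hardest, precisely because the multiplicative volumes cannot see it and one must instead exploit the combinatorics of the geometric decomposition directly.
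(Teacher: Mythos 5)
Your treatment of the geometric cases (simplicial volume for manifolds with a hyperbolic piece, Seifert volume for $\widetilde{SL_2}$) matches the paper's strategy, and your covering-space reduction to a multiplicative invariant is sound for aspherical $M$. But there are two genuine gaps.

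First, you have omitted the reducible case entirely. Your opening claim that ``every $M$ outside the exceptional list is a closed aspherical $3$-manifold'' is false: a non-trivial connected sum such as $T^3\#T^3$, or $(S^2\times S^1)\#(S^2\times S^1)$, is not covered by $S^3$, by a $T^2$-bundle over $S^1$, or by $\Sigma\times S^1$, yet it is not aspherical (by the sphere theorem, $\pi_2\neq 0$) and has vanishing simplicial volume. For such $M$ your Whitehead-theorem step (which needs asphericity) and both of your invariants break down, so your argument proves nothing. The paper handles exactly this case with a different tool: writing $\pi_1(M)=\pi_1(M_1)\ast\pi_1(M_2)$, the free-product formula for the first $L^2$-Betti number \cite[Theorem 4.15 (ii)]{Ka} gives $b_1^{(2)}(\pi_1(M))\neq 0$ once some $|\pi_1(M_i)|>2$ (the excluded case $\mathbb{Z}_2\ast\mathbb{Z}_2$ being precisely $\mathbb{R}P^3\#\mathbb{R}P^3$, which is covered by $S^2\times S^1$), and then Lemma \ref{l:multiplicative} yields that $\pi_1(M)$ is finitely co-Hopfian, which combined with the injectivity from Theorem \ref{t:main} finishes the proof.

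Second, for non-geometric graph manifolds you offer only a plan, not a proof: the assertion that the JSJ graphs, rational Euler numbers and gluing matrices are ``rigid enough to rule out a proper cover'' is exactly the hard content, and as written it is a conjecture. (It is essentially Wang's original case-by-case argument, which the paper explicitly aims to replace.) The paper closes this case differently: by Derbez--Wang \cite{DW,DW1}, every graph manifold $M$ has a finite cover $\overline M$ with non-zero Seifert volume. Given a self-map $g$ of non-zero degree, one pulls back: let $q\colon \overline M_g\to M$ be the cover corresponding to $g_*^{-1}p_*(\pi_1(\overline M))$ and $\bar g\colon \overline M_g\to\overline M$ a lift, so that $\deg(g)\deg(q)=\deg(p)\deg(\bar g)$. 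Since $\pi_1(M)$ has only finitely many subgroups of index at most $[\pi_1(M):p_*(\pi_1(\overline M))]$, there are finitely many homotopy types for $\overline M_g$, and functoriality of the Seifert volume together with $I(\overline M)\neq 0$ bounds $\deg(\bar g)$; hence only finitely many values of $\deg(g)$ are possible, which (by iterating $g$) forces $\deg(g)=\pm1$ and thus surjectivity of $g_*$. This virtual-invariant-plus-finiteness argument is the key idea your proposal is missing, and it is what lets the paper avoid any combinatorial analysis of the JSJ decomposition.
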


In terms of Thurston geometries, Corollary \ref{c2:Wang} says roughly that self-maps of non-zero degree induce $\pi_1$-isomorphisms always on non-geometric $3$-manifolds and sometimes on geometric ones. More precisely,  Corollary \ref{c2:Wang} says that any self-map of non-zero degree of a $3$-manifold induces a $\pi_1$-isomorphism unless $M$ is modeled on one of the geometries $\bR^3$, $Nil^3$, $Sol^3$, $\bH^2\times\bR$, $S^2\times\bR$ or $S^3$ (but clearly $M$ is not $S^3$ itself). Wang's proof in~\cite{Wa} relies on the non-vanishing of the simplicial volume for hyperbolic $3$-manifolds as well as for $3$-manifolds with a hyperbolic piece in their JSJ decomposition, and then proceeds again with a case-by-case study, including a careful analysis of endomorphisms of non-trivial free products, since the fundamental group of a connected sum is the free product of the fundamental groups of the summands. Having established Theorem \ref{t:main}, we will see that actually the non-vanishing of various numerical invariants alone suffices. Indeed, the following consequence of Theorem \ref{t:main}, about $3$- and $4$-manifolds, encompasses Corollary \ref{c2:Wang}:

\begin{corollary}\label{c:main}
Let $M$ be a closed oriented manifold which belongs to one of the following two classes
\begin{itemize}
\item[(i)] all $3$-manifolds which are not modeled on one of the geometries $\bR^3$, $Nil^3$, $Sol^3$, $\bH^2\times\bR$, $S^2\times\bR$ or $S^3$. 
\item[(ii)] all (possibly trivial) connected sums of geometric $4$-manifolds which are not covered by $N\#(\#_{p\geq0}S^2\times S^2)\#(\#_{q\geq0}\bC P^2)$, where $N$ is modeled on one of the geometries $Sol^4_0$, $Sol^4_1$, $Sol^4_{m\neq n}$, $Nil^4$, $\mathbb X^3\times \bR$ or $S^2\times\mathbb X^2$.
\end{itemize}
Then any non-zero degree map $f\colon M\to M$ is a $\pi_1$-isomorphism.
\end{corollary}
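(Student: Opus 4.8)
The plan is to promote the $\pi_1$-injectivity of Theorem~\ref{t:main} to surjectivity by feeding the resulting self-isomorphism into numerical invariants that scale under finite covers. Let $f\colon M\to M$ have $\deg f\neq 0$. A non-zero degree map has $\pi_1$-image of finite index, so $H:=f_*(\pi_1(M))$ has finite index $k:=[\pi_1(M):H]$, and Theorem~\ref{t:main} makes $f_*\colon \pi_1(M)\to H$ an isomorphism. Thus $\pi_1(M)$ is isomorphic to its own index-$k$ subgroup $H$, and the corollary asserts exactly that this forces $k=1$; writing $\widehat M\to M$ for the $k$-fold cover with $\pi_1(\widehat M)=H$, I want to show $\widehat M\simeq M$ implies $k=1$.

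For the prime summands, which are aspherical in all the included cases, I would run a uniform argument: choose a homotopy invariant $\nu$ that is multiplicative under finite covers, $\nu(\widehat M)=k\,\nu(M)$, and with $\nu(M)\neq 0$. Since $\widehat M$ and $M$ are aspherical with isomorphic fundamental groups they are homotopy equivalent, so $\nu(M)=\nu(\widehat M)=k\,\nu(M)$ and hence $k=1$. In dimension four the included aspherical geometries are exactly $\bH^4$, $\bH^2(\bC)$ and $\bH^2\times\bH^2$, all with $\chi(M)\neq 0$, and $\chi$ is both multiplicative under covers and a homotopy invariant, so it closes these cases at once. In dimension three I would take $\nu$ to be the simplicial volume for $\bH^3$-manifolds and, more generally, for every manifold carrying a hyperbolic piece in its geometric/JSJ decomposition, where $\|M\|>0$.

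The crux is the range where both $\chi$ and $\|\cdot\|$ vanish: the $\widetilde{SL_2}$-manifolds and the non-geometric graph manifolds. For a $\widetilde{SL_2}$-manifold I would use the central extension $1\to\bZ\to\pi_1(M)\to Q\to 1$ with $Q$ a cocompact Fuchsian group. The center is characteristic, so the isomorphism onto $H$ restricts to multiplication by some $a\neq 0$ on $\bZ$ and descends to an injective endomorphism $\bar\phi$ of $Q$ with image of index $j$, whence $k=|a|\,j$. As $\chi(Q)<0$ and $\bar\phi(Q)\cong Q$ forces $\chi(\bar\phi(Q))=j\,\chi(Q)=\chi(Q)$, I get $j=1$, i.e. $\bar\phi\in\mathrm{Aut}(Q)$; the Euler class $\varepsilon$ of the fibration has infinite order (non-zero Euler number) and transforms by $\bar\phi^*\varepsilon=a\,\varepsilon=\pm\varepsilon$, so $|a|=1$ and $k=1$. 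For a non-geometric graph manifold the same philosophy applies in two steps: the additive invariant $\sum_i|\chi^{\mathrm{orb}}(B_i)|$ over the base orbifolds of the Seifert pieces of the (homotopy-invariant) JSJ decomposition is positive and forces the base degree of the cover to be $1$, after which the residual possibility of a pure fibre multiplication must be excluded by an Euler-number/gluing argument along the JSJ tori, exactly as the non-zero Euler number rules out fibre multiplication in the $\widetilde{SL_2}$ case.

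Finally, the reducible manifolds have $\pi_1(M)$ a non-trivial free product $G_1*\cdots*G_r$. Here the relevant numerical invariant is the rational Euler characteristic $\chi(G)=\sum_i\chi(G_i)-(r-1)$, which is multiplicative under passage to finite-index subgroups; a direct check shows that, among manifolds not covered by the excluded forms, it vanishes only in the degenerate configurations such as $\pi_1=\bZ/2*\bZ/2$ (realized by $\bR P^3\#\bR P^3$ and $\bR P^4\#\bR P^4$), and these are exactly the ones double-covered by an excluded geometry ($S^2\times\bR$, respectively $S^3\times\bR$) and hence barred by hypothesis. When $\chi(G)\neq0$ we again get $k=1$; in the borderline cases where it could vanish I would fall back on the Kurosh and Grushko theorems to match the freely indecomposable factors of $H\cong G$ with those of $G$, using the property already established for the prime pieces that no $G_i$ is isomorphic to a proper finite-index subgroup of itself, and read off $k=1$. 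I expect the main obstacle to be precisely this bookkeeping for free products, together with the graph-manifold fibre-multiplication step and the verification that the vanishing loci of $\chi$, $\|\cdot\|$ and the Seifert invariants coincide exactly with the excluded commensurability classes, so that no included manifold slips through.
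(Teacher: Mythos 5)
Your overall strategy---$\pi_1$-injectivity from Theorem \ref{t:main}, then non-vanishing multiplicative invariants to force index one---is exactly the paper's, and several of your cases go through: simplicial volume for $\bH^3$-manifolds and manifolds with a hyperbolic JSJ piece, Euler characteristic for the aspherical $4$-dimensional geometries, and your central-extension/Euler-class argument for $\widetilde{SL_2}$-manifolds is a correct alternative to the paper's use of the Brooks--Goldman Seifert volume \cite{BG}. But the two cases you yourself flag as ``the main obstacle'' are genuinely incomplete, and they are precisely where the paper's proof has real content. First, graph manifolds (non-trivial JSJ, all pieces Seifert): your $\sum_i|\chi^{\mathrm{orb}}(B_i)|$ count does reduce the problem to excluding a piecewise vertical (pure fibre-multiplication) self-covering, but that exclusion \emph{is} the hard step, and ``an Euler-number/gluing argument along the JSJ tori'' is not an argument---Euler numbers of Seifert pieces with boundary are only defined relative to choices of sections on the boundary tori, and controlling how they transform against the gluing matrices is exactly the delicate bookkeeping of Wang's original case analysis. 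The paper bypasses this entirely: by Derbez--Wang \cite{DW,DW1} every graph manifold has a finite cover $\overline M$ with non-zero Seifert volume, and the commutative-diagram argument around \eqref{eq:deg} and \eqref{eq:index} (finitely many covers of bounded index, finitely many possible values of $\deg(\bar g)$ since $I(\overline M)\neq 0$) forces $\deg(g)=\pm1$ without ever touching gluing data; this is also what yields Theorem \ref{t:hopf}.

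Second, your free-product step fails in dimension four. The claim that $\chi(G_1)+\chi(G_2)-(r-1)$ vanishes ``only in degenerate configurations such as $\bZ/2\ast\bZ/2$'' is false: take $M_1=\Sigma_2\times\Sigma_2$ (so $\chi(\pi_1(M_1))=4$) and $M_2$ an orientable $S^2\times\bH^2$-manifold whose group contains $\pi_1(\Sigma_4)$ with index $2$ (a free $\bZ/2$-quotient of $S^2\times\Sigma_4$), so $\chi(\pi_1(M_2))=-3$. Then $M_1\# M_2$ satisfies the hypotheses of Corollary \ref{c:main}(ii)---every finite cover has at least two summands with infinite fundamental group, so it is never of the excluded form---yet $\chi(\pi_1(M_1\#M_2))=4-3-1=0$, and your invariant sees nothing. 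This is exactly why the paper uses the \emph{first $L^2$-Betti number} rather than the Euler characteristic: by \cite[Theorem 4.15 (ii)]{Ka}, $b_1^{(2)}(G_1\ast G_2)\geq 1-\tfrac{1}{|G_1|}-\tfrac{1}{|G_2|}\geq 0$, and since all terms in Kammeyer's formula are nonnegative, vanishing occurs only for $G_1\cong G_2\cong\bZ/2$, which the hypotheses exclude. Your fallback via Kurosh/Grushko ``bookkeeping'' is only a declaration of intent: matching the Kurosh factors of a finite-index subgroup with those of the ambient free product and ruling out proper index is the substantial analysis the $L^2$-argument is designed to avoid. (Also, $\bR P^4\#\bR P^4$ is non-orientable, so it is not the relevant realization of $\bZ/2\ast\bZ/2$ here; the oriented examples are connected sums of free $\bZ/2$-quotients of $S^2\times S^2$, whose relevant double cover is $(S^3\times S^1)\#(S^2\times S^2)\#(S^2\times S^2)$, an excluded connected-sum form rather than an ``excluded geometry''.)
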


In fact, stronger group theoretic results than Corollary \ref{c:main} will be proved in Section \ref{s:iso} (Theorems \ref{t:coHopf3} and \ref{t:coHopf4}). Also, as we already indicated in the above paragraph, we clearly do not take into account in Corollary \ref{c:main} (or in Theorem \ref{t:main}) some trivial cases: $S^3$ in the case of $3$-manifolds, and $S^4$, $\mathbb{CP}^2$ or $S^k$-bundles ($k=2,3$) over oriented manifolds for the summand $N$ in the case of $4$-manifolds.

In~\cite{NeoHopf}, a strong version of Problem \ref{p:Hopf} was proposed for aspherical manifolds: 

\begin{problem}[Strong version of the Hopf problem for aspherical manifolds]\cite[Problem 1.2]{NeoHopf}\label{p:sHopf}
Let $M$ be a closed oriented aspherical manifold. Is every self-map $f\colon M\to M$ with $\deg(f)\neq0$ either a homotopy equivalence or homotopic to a non-trivial covering?
\end{problem}

The proofs of Theorem \ref{t:main} and Corollary \ref{c:main} involve for the most part the study of endomorphisms of fundamental groups of aspherical manifolds and their free products. Within this study, we will obtain the following partial answer to Problem \ref{p:sHopf} in any dimension: 

\begin{theorem}\label{t:hopf}
Let $M$ be a closed aspherical manifold with Hopfian fundamental group, which has a finite cover $\overline M$ with $I(\overline M)\neq0$, where $I$ is a functorial numerical invariant. Then any self-map $g\colon M\to M$ of non-zero degree is a homotopy equivalence. In particular, $g$ lifts to a homotopy equivalence $\bar g\colon \overline M\to \overline M$.
\end{theorem}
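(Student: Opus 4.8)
The plan is to reduce the whole statement to the single assertion that $g_*\colon \pi_1(M)\to\pi_1(M)$ is an isomorphism, and then to extract this from two independent inputs: the numerical invariant forces $|\deg(g)|=1$, while Hopfianity upgrades the resulting $\pi_1$-surjectivity to a $\pi_1$-isomorphism. The reduction itself is pure asphericity: since $M$ is a $K(\pi_1(M),1)$ (with the homotopy type of a CW complex), a based self-map is determined up to homotopy by $g_*$, and it is a homotopy equivalence as soon as $g_*$ is an isomorphism. Indeed, realizing $g_*^{-1}$ by a map $h$, both $h\circ g$ and $g\circ h$ induce the identity on $\pi_1(M)$, and a self-map of an aspherical space inducing the identity on $\pi_1$ is homotopic to $\mathrm{id}_M$; hence $h$ is a homotopy inverse of $g$. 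So the entire content is to show $g_*$ is an isomorphism.

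First I would isolate the exact properties of the functorial invariant $I$ that are used. I will need (a) monotonicity under nonzero-degree maps, $I(X)\geq |\deg(f)|\,I(Y)$ for $f\colon X\to Y$, and (b) multiplicativity under finite covers, $I(\overline M)=\ell\,I(M)$ with $\ell=[\pi_1(M):\pi_1(\overline M)]$; both hold, e.g., for the simplicial volume. Property (b) converts the hypothesis $I(\overline M)\neq 0$ into $I(M)\neq 0$. Applying (a) to the self-map $g$ gives $I(M)\geq |\deg(g)|\,I(M)$, and dividing through by $I(M)>0$ yields $|\deg(g)|\leq 1$; as $\deg(g)$ is a nonzero integer, $|\deg(g)|=1$.

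Next I would invoke the standard fact that a degree-$d$ map of closed oriented $n$-manifolds, $d\neq0$, has image $f_*\pi_1$ of finite index dividing $|d|$: factoring $f$ through the cover associated to $f_*\pi_1(M)$ expresses $d$ as the number of sheets times the degree of the lift. With $|\deg(g)|=1$ this index is $1$, so $g_*$ is surjective. Since $\pi_1(M)$ is Hopfian, the surjective endomorphism $g_*$ is an isomorphism, and by the first paragraph $g$ is a homotopy equivalence.

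Finally, for the lifting statement I would arrange $\overline M$ to be $g_*$-invariant. Using that $\pi_1(M)$ is finitely generated, I may refine $\overline M$ to a cover $\overline M'$ corresponding to a characteristic finite-index subgroup (the intersection of the finitely many subgroups of a fixed index), which still satisfies $I(\overline M')\neq0$ by (b); replacing $\overline M$ by $\overline M'$ if necessary, I assume $\pi_1(\overline M)$ is characteristic. Then the automorphism $g_*$ preserves $\pi_1(\overline M)$, so $g\circ p$ lifts to $\bar g\colon \overline M\to\overline M$ covering $g$, and $\bar g_*$ is the restriction of $g_*$, hence an automorphism; as $\overline M$ is again closed aspherical, $\bar g$ is a homotopy equivalence by the same argument. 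I expect the main obstacle to be organizational rather than technical: pinning down precisely which two functorial properties make the hypothesis $I(\overline M)\neq0$ bear on $M$ itself, and securing a cover invariant under $g_*$ so that the lift $\bar g$ is defined.
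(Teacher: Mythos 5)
Your argument has a genuine gap at its central step: you assume a property (b), multiplicativity of $I$ under finite covers, $I(\overline M)=\ell\, I(M)$, which is not part of the hypotheses and is false for exactly the invariants the theorem is designed for. The theorem only assumes $I$ is \emph{functorial} in the sense of Definition \ref{d:functorial}: $I(X)\geq|\deg(f)|\,I(Y)$ for any $f\colon X\to Y$ of non-zero degree. Applied to the covering $p\colon\overline M\to M$ this yields only $I(\overline M)\geq \ell\, I(M)$, which is the wrong direction: it cannot convert the hypothesis $I(\overline M)\neq0$ into $I(M)\neq0$. Indeed, the paper stresses immediately after the statement that the interest of the theorem is precisely that $I(M)$ may vanish while $I(\overline M)\neq0$. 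This is not a hypothetical pathology: in the intended application (inside the proof of Theorem \ref{t:coHopf3}) $M$ is a graph manifold, $I$ is the Seifert volume, and by Derbez--Wang the Seifert volume of $M$ can be zero while some finite cover has non-zero Seifert volume; so multiplicativity fails there, and with it your key deduction $I(M)\geq|\deg(g)|\,I(M)\Rightarrow|\deg(g)|=1$ collapses. (For the simplicial volume your (b) does hold, but then the theorem would be uninteresting, since the non-vanishing hypothesis could be stated on $M$ directly.)

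The paper avoids this by working upstairs, where the invariant is actually non-zero: it takes the cover $q\colon\overline M_g\to M$ corresponding to $g_*^{-1}p_*(\pi_1(\overline M))$, lifts $g\circ q$ to $\bar g\colon\overline M_g\to\overline M$, and uses the relation $\deg(g)\deg(q)=\deg(p)\deg(\bar g)$. Since $\pi_1(M)$ is finitely generated, there are only finitely many subgroups of index at most $[\pi_1(M):p_*(\pi_1(\overline M))]$, hence finitely many homotopy types for $\overline M_g$; functoriality applied to $\bar g$ (whose \emph{target} $\overline M$ has $I(\overline M)\neq0$) then bounds $|\deg(\bar g)|$, and hence $|\deg(g)|$, uniformly over all self-maps $g$ of non-zero degree. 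Multiplicativity of the degree under iteration of $g$ then forces $\deg(g)=\pm1$. From there, your remaining steps (surjectivity of $g_*$ via the index-divides-degree fact, Hopfian implies isomorphism, asphericity implies homotopy equivalence) are correct, and your characteristic-subgroup device for producing the lift is also sound; note that the non-vanishing $I(\overline M')\neq0$ for the refined cover follows from functoriality alone, so that part never needed (b). But as written, the heart of your proof establishes only a strictly weaker statement with an added multiplicativity hypothesis.
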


What makes Theorem \ref{t:hopf} interesting is that the non-vanishing assumption $I(\overline M)\neq0$ does not necessarily hold for $M$, i.e. $I(M)$ could be zero. In addition, we remark that the assumption that $\pi_1(M)$ is Hopfian might be redundant, since no example of a closed aspherical manifold with non-Hopfian fundamental group is known.

\subsection*{Acknowledgments}
I would like to thank Grigori Avramidi for useful discussions. Parts of this project were carried out during research stays at Max Planck Institute for Mathematics in Bonn in 2024 and 2025. Partial support by the University of Cyprus is gratefully acknowledged.

\section{Geometric manifolds in low dimensions}

Given a complete simply connected Riemannian $n$-manifold  $\mathbb{X}^n$, we say that a manifold $M$ is an {\em $\mathbb{X}^n$ manifold}, or is {\em modeled on $\mathbb{X}^n$}, or {\em carries the $\mathbb{X}^n$ geometry} in the sense of Thurston, if it is diffeomorphic to a quotient of
$\mathbb{X}^n$ by a lattice $G$ in the group of isometries of $\mathbb{X}^n$, where $G=\pi_1(M)$. 
We say that two geometries $\mathbb{X}^n$ and $\mathbb{Y}^n$ are the same if there is a diffeomorphism $\psi \colon \mathbb{X}^n
\longrightarrow \mathbb{Y}^n$ and an isomorphism $\mathrm{Isom}(\mathbb{X}^n) \longrightarrow \mathrm{Isom}(\mathbb{Y}^n)$ that sends each $g \in \mathrm{Isom}(\mathbb{X}^n)$ to $\psi \circ g \circ \psi^{-1} \in \mathrm{Isom}(\mathbb{Y}^n)$.

Below, we describe all closed oriented low dimensional manifolds that carry a Thurston geometry. In fact, this gives us a complete list for manifolds of dimensions $\leq 3$.

\subsection*{Dimensions one and two}

The circle $S^1=\bR/\bZ$ is the only closed $1$-manifold; it is modeled on $\bR$. In dimension two, a closed oriented surface of genus $g\geq0$ will be denoted by $\Sigma_g$: For $g=0$ we have the 2-sphere $\Sigma_0=S^2$, which is modeled on $S^2$, for $g=1$ the 2-torus $\Sigma_1=T^2=\bR^2/\bZ^2$, which is modeled on $\bR^2$, and for $g\geq2$ hyperbolic surfaces $\Sigma_g=\mathbb{H}^2/\pi_1(\Sigma_g)$, which are modeled on $\mathbb{H}^2$, where
\[
\pi_1(\Sigma_g)=\langle a_1,b_1,...,a_g,b_g \ | \ [a_1,b_1]\cdots[a_g,b_g]=1\rangle.
\]
Table \ref{table:2geom} summarises the geometries in dimension two.
\begin{table}[!ht]
\centering
{\small
\begin{tabular}{c|c}
Type & Geometry $\mathbb{X}^2$\\
\hline
         Spherical   & $S^2$\\
         Euclidean    & $\bR^2$ \\
Hyperbolic & $\mathbb{H}^2$\\            
\end{tabular}}
\vspace{9pt}
\caption{{\small The $2$-dimensional Thurston geometries}}\label{table:2geom}
\end{table}

\subsection*{Dimension three}

Thurston proved that there exist eight homotopically unique geometries: 
$$
\mathbb{H}^3, Sol^3,
\widetilde{SL_2}, \mathbb{H}^2 \times \bR, Nil^3, \bR^3, S^2 \times \bR, S^3.
$$
In Table \ref{table:3geom}, we list the finite covers for closed manifolds in each of those geometries and refer to~\cite{Thu,Scott,Agol} for the proofs. 

\begin{table}[!ht]
\centering
{\small
\begin{tabular}{r|l}
Geometry $\mathbb{X}^3$ & $M$ is finitely covered by...\\
\hline
$\mathbb{H}^3$     & a mapping torus of a hyperbolic surface with pseudo-Anosov monodromy\\
$Sol^3$            & a mapping torus of $T^2$ with hyperbolic monodromy\\
$\widetilde{SL_2}$ & a non-trivial $S^1$-bundle over a hyperbolic surface\\  
$Nil^3$            & a non-trivial $S^1$-bundle over $T^2$\\
$\mathbb{H}^2 \times \bR$ & a product of $S^1$ with a hyperbolic surface\\
$\bR^3$             & the $3$-torus $T^3$\\
$S^2 \times \bR$    &  the product $S^2 \times S^1$\\
$S^3$              & the $3$-sphere $S^3$
\end{tabular}}
\vspace{9pt}
\caption{{\small Finite covers of Thurston geometric 3-manifolds.}}\label{table:3geom}
\end{table}

\subsection*{Dimension four}
The $4$-dimensional Thurston's geometries were classified by Filipkiewicz in his thesis~\cite{Filipkiewicz}. In Table \ref{table:4geom}, we list the geometries that are realised by compact manifolds, following~\cite{Wall1,Wall2} and~\cite{Hillman}. In Table \ref{table:4geomvirtual} we list finite covers for manifolds modeled  in each of the  geometries that are not modeled on a hyperbolic geometry or the irreducible $\mathbb H^2\times \mathbb H^2$ geometry; we refer to~\cite{Hillman} for most of the proofs (see also~\cite[Section 6]{NeIIPP} regarding some of the solvable geometries).

\begin{table}[!ht]
\centering
{\small
\begin{tabular}{r|l}
Type  & Geometry $\mathbb{X}^4$\\
\hline
Hyperbolic & $\mathbb{H}^4$, $\mathbb{H}^2(\mathbb{C})$\\        
                Solvable  & $Nil^4$, 
$Sol^4_{m \neq n}$, $Sol^4_0$,
             $Sol^4_1$, $Sol^3\times\bR$, $Nil^3\times\mathbb{R}$,  $\bR^4$\\
             Compact &  $S^4$, $\mathbb{CP}^2$, $S^2\times S^2$\\
                 Mixed  products  &  $S^2\times\mathbb{H}^2$, $S^2\times \bR^2$,  $S^3 \times \bR$, 
 $\mathbb{H}^3\times\mathbb{R}$,
           $\mathbb{H}^2\times\mathbb{R}^2$, $\mathbb{H}^2\times\mathbb{H}^2$,
              $\widetilde{SL_2}\times\mathbb{R}$ \\
\end{tabular}}
\vspace{9pt}
\caption{{\small The 4-dimensional Thurston geometries with compact representatives.}}\label{table:4geom}
\end{table}

\begin{table}[!ht]
\centering
{\small
\begin{tabular}{r|l}
Geometry $\mathbb{X}^4$ & $M$ is finitely covered by...\\
\hline
\text{reducible} \ $\mathbb{H}^2\times\mathbb{H}^2$     & a product of two hyperbolic surfaces\\
$S^2\times\mathbb{H}^2$ & an $S^2$-bundle over a hyperbolic surface\\
$\mathbb{X}^3\times\mathbb R$ & a product of an $\mathbb{X}^3$-manifold with $S^1$\\
$Sol^4_0$            & a non-trivial mapping torus of $T^3$\\
$Sol^4_1$            & a non-trivial $S^1$-bundle over a $Sol^3$-manifold\\
$Sol^4_{m\neq n}$            & a non-trivial mapping torus of $T^3$\\
$Nil^4$            & a non-trivial $S^1$-bundle over a $Nil^3$-manifold\\
$S^2 \times S^2$    &  the product $S^2 \times S^2$\\
$\mathbb{CP}^4$             & the complex projective space $\mathbb{CP}^4$ \\
$S^4$              & the $4$-sphere $S^3$\\
\end{tabular}}
\vspace{9pt}
\caption{{\small Finite covers of Thurston geometric 4-manifolds that are not hyperbolic or irreducible $\mathbb H^2\times \mathbb H^2$ manifolds.}}\label{table:4geomvirtual}
\end{table}

\section{Residual finiteness}

Recall that a group $G$ is called {\em residually finite} if for any non-trivial element $g\in G$ there is a finite group $A$ and a homomorphism $\varrho\colon G\to A$ such that $\varrho(g)\neq1$. Equivalently, $G$ is residually finite if the intersection of all its (normal) subgroups of finite index is trivial. Also, we say that a group $G$ is {\em Hopfian} or it has the {\em Hopf property} if every surjective endomorphism of $G$ is an automorphism.

\subsection{Hirshon's work} 
By a theorem of Mal'cev~\cite{Mal}, every finitely generated residually finite group is Hopfian. Hirshon~\cite{Hir} proved the following  generalisation of Mal'cev's theorem:

\begin{theorem}[Hirshon]\label{t:Hirshon}
Let $G$ be a finitely generated residually finite group. If $\varphi$ is an endomorphism of $G$ such that $[G:\varphi(G)]<\infty$, then there is some $n\in\bN$ such that $\varphi$ is injective on $\varphi^n(G)$.
\end{theorem}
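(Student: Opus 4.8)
The plan is to work entirely inside the descending chain of finitely generated subgroups $G_n:=\varphi^n(G)$, and to reduce the statement to a single finite quotient where residual finiteness does the work. Note first that $\varphi$ restricts to a surjection $\varphi\colon G_n\to G_{n+1}$, so that $\varphi$ is injective on $G_n$ if and only if $K_n:=\ker\varphi\cap G_n$ is trivial, and that the $K_n$ form a descending chain. I would begin by showing that every $G_n$ has finite index and, more precisely, that the indices $a_n:=[G_n:G_{n+1}]$ form a non-increasing sequence. The key observation is that $\varphi$ maps $G_n$ onto $G_{n+1}$ and maps $G_{n+1}$ onto $G_{n+2}$; pulling $G_{n+2}$ back along the surjection $\varphi|_{G_n}$ gives a subgroup of $G_n$ of index $a_{n+1}$ containing $G_{n+1}$, whence $a_{n+1}\mid a_n$. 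Being a non-increasing sequence of positive integers, $(a_n)$ is eventually constant, say $a_n=a$ for all $n\geq N$.

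Next I would translate this stabilisation into stabilisation of the kernels. Computing the index of $G_{n+1}K_n$ in $G_n$ via the isomorphism $G_n/K_n\cong G_{n+1}$ gives $[G_{n+1}K_n:G_{n+1}]=a_n/a_{n+1}$, so once the indices are constant we get $K_n\subseteq G_{n+1}$ and therefore $K_n=K_{n+1}$ for all $n\geq N$. Thus $K:=K_n$ is a fixed subgroup for $n\geq N$, it satisfies $\varphi(K)=1$, and it is contained in $G_n$ for every $n\geq N$. The whole theorem now reduces to proving $K=1$: if $K=1$ then $\varphi$ is injective on $G_N=\varphi^N(G)$, as required.

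To prove $K=1$ I would argue by contradiction and let residual finiteness enter through one carefully chosen finite quotient. Suppose $1\neq g\in K$. By residual finiteness there is a finite-index subgroup of $G$ avoiding $g$; let $m$ be its index and set $H:=\bigcap\{V\leq G:[G:V]\leq m\}$. Since $G$ is finitely generated it has only finitely many subgroups of index at most $m$ (M.\ Hall), so $H$ has finite index, and the standard index inequality $[G:\theta^{-1}(V)]=[\theta(G):\theta(G)\cap V]\leq[G:V]$ shows that $H$ is invariant under every endomorphism of $G$; in particular $\varphi(H)\subseteq H$ and $g\notin H$. Passing to the finite group $\overline G:=G/H$, the map $\varphi$ induces an endomorphism $\overline\varphi$ with $\overline\varphi^{\,n}(\overline G)=\overline{\varphi^n(G)}$. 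The descending chain of images of $\overline\varphi$ stabilises to some subgroup $R$ on which $\overline\varphi$ restricts to an automorphism. Since $g\in G_n$ for all $n\geq N$, its image $\bar g$ lies in $R$ and is non-trivial, yet $\overline\varphi(\bar g)=\overline{\varphi(g)}=1$; injectivity of $\overline\varphi$ on $R$ forces $\bar g=1$, a contradiction.

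The main obstacle is the stabilisation step: the proof genuinely needs a single non-trivial element of $\ker\varphi$ lying in $\varphi^n(G)$ for all large $n$, which is exactly what the stabilisation of the kernels $K_n$ provides, and this in turn rests on the index monotonicity $a_{n+1}\mid a_n$. Residual finiteness is used only at the very end, and only to manufacture the fully invariant finite-index subgroup $H$ that separates the offending element while remaining $\varphi$-invariant.
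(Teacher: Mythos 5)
Your proof is correct, but there is nothing in the paper to compare it against: the paper states Theorem \ref{t:Hirshon} as a citation to Hirshon's article [Hi77] and never reproves it (the only argument the paper supplies in that section is the deduction of Corollary \ref{c:Hirshon} \emph{from} the theorem). Taken on its own terms, your argument is complete and well organized. The index computations are right: $a_0=[G:\varphi(G)]<\infty$, the pullback of $G_{n+2}$ under the surjection $\varphi|_{G_n}\colon G_n\to G_{n+1}$ gives $a_{n+1}\mid a_n$ by induction, and the identity $[G_{n+1}K_n:G_{n+1}]=a_n/a_{n+1}$ (valid because $K_n=\ker(\varphi|_{G_n})$ is normal in $G_n$, so $G_{n+1}K_n$ is a subgroup, and $\varphi$ carries it onto $G_{n+2}$) correctly converts stabilization of the indices into stabilization of the kernels, which is the genuinely new ingredient here beyond the classical Mal'cev argument. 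The endgame is exactly the standard Mal'cev device (finitely many subgroups of index at most $m$ by M.\ Hall, hence a fully invariant finite-index subgroup $H$ separating $g$), upgraded by the observation that the stable kernel element survives into the stable image $R$ of $\overline\varphi$ in the finite quotient, where $\overline\varphi$ is bijective. One point you leave implicit: to form $\overline G=G/H$ you need $H$ normal, but this is immediate since inner automorphisms are among the endomorphisms under which you have shown $H$ to be invariant (equivalently, conjugation permutes the subgroups of index at most $m$). In short: a correct, self-contained, elementary proof of a result the paper only quotes, structured as ``index stabilization $\Rightarrow$ kernel stabilization $\Rightarrow$ Mal'cev-type finite-quotient contradiction.''
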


An interesting consequence of Theorem \ref{t:Hirshon} is the following, for which we include a proof for clarity:

\begin{corollary}[Hirshon]\label{c:Hirshon}
Let  $G$ be a finitely generated residually finite group. If $\varphi$ is an endomorphism of $G$ such that $[G:\varphi(G)]<\infty$, then $\ker(\varphi)$ is finite. In particular, $\varphi$ is injective if $G$ is either
\begin{itemize}
\item[(1)] torsion-free or 
\item[(2)] a non-trivial free product.
\end{itemize}
\end{corollary}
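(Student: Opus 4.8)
The plan is to derive everything from Theorem~\ref{t:Hirshon} by elementary index bookkeeping. First I would record that each iterate $\varphi^k(G)$ has finite index in $G$. Indeed, a homomorphism cannot increase the index of a subgroup, so applying $\varphi$ to the pair $\varphi(G)\supseteq\varphi^2(G)$ gives $[\varphi(G):\varphi^2(G)]\leq[G:\varphi(G)]<\infty$; combined with the multiplicativity of the index along the descending chain $G\supseteq\varphi(G)\supseteq\varphi^2(G)\supseteq\cdots$, an induction yields $[G:\varphi^k(G)]<\infty$ for every $k$.

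Next, I would apply Theorem~\ref{t:Hirshon} to obtain some $n\in\bN$ for which $\varphi$ is injective on $\varphi^n(G)$; this injectivity is exactly the statement that $\ker(\varphi)\cap\varphi^n(G)=\{1\}$. Since $\varphi^n(G)$ has finite index in $G$, intersecting it with the subgroup $\ker(\varphi)$ produces a subgroup of finite index in $\ker(\varphi)$, namely $[\ker(\varphi):\ker(\varphi)\cap\varphi^n(G)]\leq[G:\varphi^n(G)]<\infty$. As this intersection is trivial, $\ker(\varphi)$ is finite, which is the main assertion.

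For the injectivity statements it remains to rule out a nontrivial finite kernel. In case~(1) this is immediate, since a finite subgroup of a torsion-free group is trivial. In case~(2) I would use that $\ker(\varphi)$ is a finite \emph{normal} subgroup of the nontrivial free product $G=A*B$ and argue it must vanish. By the structure theory of free products, every finite subgroup of $A*B$ lies in some conjugate $gAg^{-1}$ or $gBg^{-1}$; say $\ker(\varphi)\leq gAg^{-1}$. Normality then forces $\ker(\varphi)\leq (hg)A(hg)^{-1}$ for all $h$. Since the normalizer of a free factor is the factor itself (here $B\neq1$), distinct cosets give genuinely distinct conjugates of $A$, and any two such meet trivially; choosing $h\notin gAg^{-1}$ therefore sandwiches $\ker(\varphi)$ inside a trivial intersection, whence $\ker(\varphi)=\{1\}$ and $\varphi$ is injective.

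I expect the only substantive point to be this last step, namely that a nontrivial free product admits no nontrivial finite normal subgroup. The index arithmetic of the first two paragraphs is routine, whereas this step genuinely invokes the Kurosh subgroup theorem together with the malnormality features of the free factors (distinct conjugates intersecting trivially and the normalizer of a factor equaling the factor). Everything else is bookkeeping with finite indices.
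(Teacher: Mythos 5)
Your proof is correct and follows essentially the same route as the paper: invoke Theorem~\ref{t:Hirshon} to get $n$ with $\ker(\varphi)\cap\varphi^n(G)=\{1\}$, bound $|\ker(\varphi)|$ by $[G:\varphi^n(G)]$, and then dispose of cases (1) and (2) by torsion-freeness and the absence of non-trivial finite normal subgroups in a non-trivial free product. The only difference is that you spell out two points the paper leaves implicit --- the finiteness of $[G:\varphi^k(G)]$ for all $k$, and the Kurosh/malnormality argument behind the free-product fact --- both of which are correct.
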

\begin{proof}
By Theorem \ref{t:Hirshon}, there is some $n\in\bN$ such that the restriction $\varphi|_{\varphi^n(G)}$ is injective, that is, $\ker(\varphi)\cap\varphi^n(G)=1$. Hence,
\[
|\ker(\varphi)|=[\ker(\varphi):\ker(\varphi)\cap\varphi^n(G)]\leq[G:\varphi^n(G)]<\infty
\]
as claimed.

Now, if $G$ is torsion-free, then clearly $\varphi$ is injective. If $G$ is a non-trivial free product, then the injectivity of $\varphi$ follows from the fact that $G$ does not contain any non-trivial finite normal subgroup.
\end{proof}

\subsection{Preservation of residual finiteness}

Despite the strong relationship between the Hopf property and residual finiteness, as evidenced in Mal'cev's and Hirshon's work, and the fact that the Hopf property is generally not preserved under subgroups or extensions, residual finiteness does respect many group theoretic operations. We gather here some of these results needed in our study.

First, we recall that residual finiteness passes both to finite-index subgroups and supergroups:

\begin{lemma}\label{l:sup-sub}
Let $G$ be a group and $H$ be a finite-index subgroup of $G$. Then $G$ is residually finite if and only if $H$ is residually finite.
\end{lemma}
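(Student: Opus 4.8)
The plan is to prove the two implications separately; the forward direction is routine, and the real content lies in the backward one. For the forward direction I would establish the more general fact that any subgroup of a residually finite group is residually finite, which suffices here. Given a non-trivial $h\in H$, residual finiteness of $G$ supplies a homomorphism $\varrho\colon G\to A$ onto a finite group with $\varrho(h)\neq 1$; restricting $\varrho$ to $H$ detects $h$ in a finite quotient of $H$. Note that finite index plays no role in this direction.

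For the backward direction, the key idea is to take a non-trivial $g\in G$ and manufacture a finite-index \emph{normal} subgroup of $G$ excluding $g$, which then yields a finite quotient of $G$ detecting $g$. I would split into two cases according to whether $g$ lies in $H$. If $g\notin H$, then $H$ itself is a finite-index subgroup of $G$ avoiding $g$. If instead $g\in H$ (and $g\neq 1$), then residual finiteness of $H$ provides a finite-index subgroup $K\leq H$ with $g\notin K$; since $[G:H]<\infty$ and $[H:K]<\infty$, the subgroup $K$ has finite index in $G$ as well. In either case I obtain a finite-index subgroup $U\leq G$ with $g\notin U$.

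The remaining, and essential, step is to pass from $U$ to something normal in $G$ without reintroducing $g$. For this I would invoke the normal core $U_G=\bigcap_{x\in G}xUx^{-1}$: it is normal in $G$, it has finite index in $G$ because $U$ does (the standard consequence of the action of $G$ on the finitely many cosets in $G/U$), and it satisfies $U_G\leq U$, so $g\notin U_G$. The quotient map $G\to G/U_G$ is then a homomorphism onto a finite group sending $g$ to a non-trivial element, as required.

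The main obstacle is conceptual rather than computational: one must recognise that residual finiteness of the \emph{smaller} group $H$ can be propagated upward to $G$, the mechanism being precisely that a finite-index subgroup of $H$ is automatically finite-index in $G$, so its normal core is available as a finite-index normal subgroup of $G$. No estimate beyond the finiteness of $[G:U]$ is needed.
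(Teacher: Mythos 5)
Your proof is correct. Note that the paper itself offers no proof to compare against --- it explicitly leaves this lemma as an exercise --- so your argument simply fills that gap, and it does so with the standard, complete reasoning: restriction of a detecting homomorphism for the ``only if'' direction (correctly observing, as the paper also remarks, that finite index is not needed there), and for the converse the passage from a finite-index subgroup $U\leq G$ avoiding $g$ (either $H$ itself or a finite-index subgroup $K\leq H$, which has finite index in $G$ by multiplicativity of the index) to its normal core $U_G=\bigcap_{x\in G}xUx^{-1}$, which is normal and of finite index in $G$ via the action on the finite coset space $G/U$, yielding the finite quotient $G/U_G$ that detects $g$. Both the case split on whether $g\in H$ and the normal core step are exactly what the exercise requires; no gaps.
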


The proof of the above lemma is left as an exercise. Note that the ``only if" direction does not require the assumption $[G:H]<\infty$.

The preservation of residual finiteness under extensions holds in very general forms, as shown  by Mal'cev and Miller:

\begin{theorem}\cite[Theorem 7]{Miller}\label{t:Mil}
Suppose $G$ fits into an extension $1\to N\to G\to Q\to 1$, where both $N$ and $Q$ are residually finite and $N$ is finitely generated. Moreover, assume that any of the following holds:
\begin{itemize}
\item[(1)] $N$ has trivial center;
\item[(2)] the extension splits; 
\item[(3)] $Q$ is free or $N$ is non-abelian free. 
\end{itemize}
Then $G$ is residually finite.
\end{theorem}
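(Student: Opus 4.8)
The plan is to show that every nontrivial $g\in G$ survives in some finite quotient, splitting into the case where $g$ has nontrivial image in $Q$ and the case $g\in N$. The first case is immediate: residual finiteness of $Q$ produces a finite quotient $Q\to A$ detecting the image of $g$, and precomposing with $G\to Q$ detects $g$ in $G$. So the entire difficulty is to separate a nontrivial element $g\in N$, and this is exactly where the hypotheses $(1)$--$(3)$ enter. One should keep in mind that residual finiteness is \emph{not} preserved under arbitrary extensions (there exist central extensions of residually finite groups by $\bZ$ that fail to be residually finite), so no argument can succeed without genuinely using the hypotheses; recognizing this, and isolating the two distinct mechanisms below, is the conceptual heart of the proof.

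A tool common to the split cases is a Mal'cev-style construction of a characteristic separating subgroup. Given $1\neq g\in N$, residual finiteness of $N$ yields a finite-index normal $N_0\trianglelefteq N$ with $g\notin N_0$; since $N$ is finitely generated it has only finitely many subgroups of index $m=[N:N_0]$, and I would let $N_1$ be their intersection. Then $N_1$ has finite index, is characteristic in $N$ (any automorphism permutes the index-$m$ subgroups and so fixes $N_1$), and satisfies $g\notin N_1\subseteq N_0$. Because $N_1$ is characteristic in the normal subgroup $N$, it is normal in $G$, so I may pass to $\bar G=G/N_1$, an extension of $Q$ by the \emph{finite} group $\bar N=N/N_1$ in which $g$ is still nontrivial.

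For case $(2)$ -- and hence case $(3)$ with $Q$ free, since a surjection onto a free group always admits a section, reducing that situation to the split case -- the extension $\bar G=\bar N\rtimes Q$ splits. The action $Q\to\mathrm{Aut}(\bar N)$ has finite image as $\bar N$ is finite, so its kernel $Q_0$ has finite index in $Q$ and is residually finite by Lemma \ref{l:sup-sub}. The preimage of $Q_0$ in $\bar G$ is $\bar N\rtimes Q_0=\bar N\times Q_0$ (the action being trivial on $Q_0$), a direct product of a finite group with a residually finite group, hence residually finite; as it has finite index in $\bar G$, Lemma \ref{l:sup-sub} gives that $\bar G$ is residually finite. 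The nontrivial image of $g$ in $\bar G$ is therefore detected in a finite quotient, and so $g$ is detected in $G$.

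For case $(1)$ -- which also covers case $(3)$ with $N$ non-abelian free, since non-abelian free groups have trivial center -- I would bypass the finite-quotient reduction and embed $G$ directly. Consider $\phi=(c,\pi)\colon G\to \mathrm{Aut}(N)\times Q$, where $c$ is the conjugation action of $G$ on its normal subgroup $N$ and $\pi\colon G\to Q$ is the quotient map. An element of $\ker\phi$ lies in $N$ (killed by $\pi$) and centralizes $N$ (killed by $c$), hence lies in $Z(N)=1$, so $\phi$ is injective. By a theorem of Baumslag, the automorphism group of a finitely generated residually finite group is residually finite; thus $\mathrm{Aut}(N)\times Q$ is residually finite, and $G$, being a subgroup of it, is residually finite as well. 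The only technical inputs beyond this are that a finitely generated group has finitely many subgroups of each finite index and Baumslag's theorem, both standard; the main obstacle is not computation but the structural insight that the centerless case and the split case must be handled by these two separate routes.
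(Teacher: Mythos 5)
Your proof is correct, but there is nothing in the paper to compare it with: the paper states this result as a citation (Theorem 7 of Miller's monograph~\cite{Miller}) and gives no proof of it. Your reconstruction is essentially the classical argument, and it is complete. The two mechanisms you isolate are exactly the standard ones: for the split case (and for free $Q$, which reduces to it because a surjection onto a free group admits a section), Mal'cev's trick of intersecting the finitely many subgroups of a given finite index in the finitely generated group $N$ produces a characteristic, hence $G$-normal, finite-index subgroup $N_1\leq N$ avoiding the given $g\in N$; the quotient extension still splits (compose the section with $G\to G/N_1$), and your passage to the finite-index subgroup $\bar N\times Q_0$ together with Lemma~\ref{l:sup-sub} correctly finishes that case. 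For the centerless case (which covers non-abelian free $N$, as you note), the embedding $G\hookrightarrow \mathrm{Aut}(N)\times Q$ via conjugation and projection is injective precisely because $Z(N)=1$, and Baumslag's theorem that $\mathrm{Aut}(N)$ is residually finite for $N$ finitely generated and residually finite, together with closure of residual finiteness under finite direct products and subgroups, yields the conclusion. All auxiliary facts you invoke (finitely many subgroups of each finite index in a finitely generated group, descent of splittings to quotients, Baumslag's theorem) are standard and correctly applied.
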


Finally, residual finiteness is preserved under taking free products as shown by Gruenberg and Baumslag:

\begin{theorem}\cite[Theorem 4.1]{Gru}\label{t:Gru}
The free product of residually finite groups is residually finite.
\end{theorem}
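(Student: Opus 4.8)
The plan is to detect an arbitrary non-trivial element of the free product in a finite quotient by routing it through an auxiliary free product of \emph{finite} groups, where the normal form theorem makes non-triviality transparent. Write $G=\ast_{i\in I}A_i$ with each $A_i$ residually finite, and fix $1\neq g\in G$. First I would reduce to finitely many factors: in the reduced normal form $g=c_1c_2\cdots c_n$, where each syllable $c_\ell$ is a non-trivial element of some factor and consecutive syllables lie in distinct factors, only finitely many factors $A_{i_1},\dots,A_{i_k}$ occur. The retraction $G\to A_{i_1}\ast\cdots\ast A_{i_k}$ that is the identity on these factors and kills every other factor fixes $g$, so it suffices to detect $g$ in a finite quotient of $A_{i_1}\ast\cdots\ast A_{i_k}$. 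Hence I may assume $I=\{1,\dots,k\}$ is finite.

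Next I would replace each factor by a finite quotient without destroying $g$. For each $j$, the syllables of $g$ lying in $A_j$ form a finite set of non-trivial elements; since $A_j$ is residually finite, intersecting finitely many finite-index normal subgroups produces a finite-index normal $N_j\trianglelefteq A_j$ missing all of them, so the quotient map $q_j\colon A_j\to \bar A_j:=A_j/N_j$ sends every such syllable to a non-trivial element of the finite group $\bar A_j$. The induced homomorphism $q\colon A_1\ast\cdots\ast A_k\to \bar A_1\ast\cdots\ast\bar A_k$ then carries $g$ to $q(c_1)\cdots q(c_n)$, which is again a reduced word of the same positive syllable length $n$ in the target (each $q(c_\ell)\neq1$, and consecutive syllables still lie in distinct factors because $q$ respects the factor decomposition); by the normal form theorem for free products it is non-trivial. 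This reduces the whole statement to the residual finiteness of a free product of finitely many \emph{finite} groups.

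For that final step I would invoke virtual freeness: a free product of finitely many finite groups acts on its Bass--Serre tree with finite vertex stabilizers and, by the Kurosh subgroup theorem, contains a free subgroup of finite index. Since free groups are residually finite (a classical fact) and residual finiteness passes between a group and its finite-index subgroups (Lemma \ref{l:sup-sub}), the free product $\bar A_1\ast\cdots\ast\bar A_k$ is residually finite. Thus there is a finite group $F$ and a homomorphism $\bar A_1\ast\cdots\ast\bar A_k\to F$ that is non-trivial on $q(g)$; composing with $q$ and the initial retraction yields a finite quotient of $G$ on which $g$ survives, proving $G$ residually finite.

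The main obstacle is concentrated entirely in the last step: everything preceding it is bookkeeping with reduced words, while the residual finiteness of free products of finite groups carries the genuine content. I expect the cleanest route is through virtual freeness as above; a self-contained alternative would be to construct explicit finite permutation representations of $\bar A_1\ast\cdots\ast\bar A_k$ that separate a given reduced word from the identity, but this essentially re-proves the same fact by hand.
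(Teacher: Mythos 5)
Your proof is correct, but note that the paper itself offers no proof of this statement: it is quoted verbatim from Gruenberg \cite{Gru}, so the only meaningful comparison is with Gruenberg's original argument. Your two reduction steps are the standard modern route and are carried out correctly: the retraction onto the finitely many factors carrying the syllables of $g$, and the passage to finite quotients of those factors chosen (by intersecting finitely many finite-index normal subgroups) so that every syllable survives, whence $q(g)$ is still a reduced word of the same positive syllable length and is non-trivial by the normal form theorem. The one place you compress is the virtual-freeness claim: to apply the Kurosh subgroup theorem you must exhibit a specific finite-index subgroup meeting no conjugate of any factor, and the standard device is the kernel $K$ of the natural surjection $\bar A_1\ast\cdots\ast\bar A_k\to \bar A_1\times\cdots\times\bar A_k$. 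This $K$ has finite index, intersects every conjugate of every factor trivially, and is therefore torsion-free (every torsion element of a free product is conjugate into a factor), so Kurosh forces $K$ to be free; residual finiteness of free groups together with Lemma~\ref{l:sup-sub} then closes the argument. With that sentence inserted, your proof is complete. By contrast, Gruenberg's own proof runs through his formalism of \emph{root properties}: he shows that for a root property $P$ (finiteness being one) the class of residually-$P$ groups is closed under free products exactly when free groups are residually-$P$, and then invokes the classical residual finiteness of free groups. Both arguments thus bottom out in the same fact about free groups; yours trades Gruenberg's general machinery for concrete normal-form bookkeeping plus virtual freeness, which is more transparent for this single application, while his yields analogous closure statements for other root properties (such as residual solvability or residual $p$-finiteness) at no extra cost.
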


\section{Proof of Theorem \ref{t:main}}

We will now prove Theorem \ref{t:main}. Naturally, we will split the proof into the two dimensions three and four. 

\subsection{3-manifolds}

The starting point is the following result of Hempel together with geometrization of $3$-manifolds:

\begin{theorem}\cite{He}\label{t:He}
Let $M$ be a compact $3$-manifold. Then $\pi_1(M)$ is residually finite.
\end{theorem}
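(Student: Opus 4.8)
The plan is to prove residual finiteness by cutting $\pi_1(M)$ along the canonical topological decompositions of $M$, reducing to the geometric pieces where residual finiteness can be read off from explicit group structure or from linearity, and then re-assembling. First I would reduce to the irreducible case. By the Kneser--Milnor prime decomposition, $M$ is a connected sum of prime manifolds, so $\pi_1(M)$ is a free product of the groups of the prime summands (together with free factors $\bZ$ coming from any $S^2\times S^1$ summands). By Theorem \ref{t:Gru}, a free product of residually finite groups is residually finite, so it suffices to treat prime $M$. A prime $3$-manifold is either irreducible or is $S^2\times S^1$, whose group $\bZ$ is obviously residually finite, so I may assume $M$ is irreducible; if $\pi_1(M)$ is finite the claim is trivial, so assume it is infinite, i.e. $M$ is aspherical.

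Next I would invoke geometrization. By the Geometrization Theorem (Thurston for Haken manifolds, Perelman in general), an irreducible aspherical $3$-manifold carries a canonical JSJ decomposition along incompressible tori into geometric pieces, each either hyperbolic or Seifert fibered; in the degenerate case (no tori) $M$ is itself geometric, e.g. a $Sol^3$ mapping torus of $T^2$. Correspondingly $\pi_1(M)$ is the fundamental group of a finite graph of groups whose vertex groups are the piece groups and whose edge groups are copies of $\bZ^2$. The residual finiteness of each vertex group is then checked case by case. For a hyperbolic piece, $\pi_1$ is a finitely generated subgroup of $\mathrm{PSL}_2(\bC)$, hence a finitely generated linear group, which is residually finite by Mal'cev's theorem~\cite{Mal}. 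For a Seifert piece, $\pi_1$ is a (central, finite) extension of a Fuchsian group by $\bZ$, and for the solvable geometries it is virtually polycyclic; in both situations residual finiteness is classical (in the $Sol^3$ mapping-torus case it is immediate from Theorem \ref{t:Mil}(3), since there $1\to\bZ^2\to\pi_1\to\bZ\to1$ with $\bZ$ free).

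The main obstacle is the gluing step, because residual finiteness is \emph{not} preserved under amalgamated free products or HNN extensions over infinite edge groups such as $\bZ^2$. To transport residual finiteness from the vertex groups to $\pi_1(M)$, I would show that each JSJ torus subgroup $\bZ^2$ is \emph{separable} (closed in the profinite topology) in the adjacent vertex groups and that the two profinite topologies induced on a shared edge group are compatible; granting this, a Bass--Serre/profinite completion argument in the spirit of Baumslag's criterion for amalgams yields residual finiteness of the graph-of-groups fundamental group. Establishing peripheral separability is the crux: for Seifert pieces it follows from the explicit structure of the group, while for hyperbolic pieces it is the substantive geometric input and is precisely where the deep topology of $3$-manifolds enters. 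This separability-and-gluing step is the heart of Hempel's argument, and it is what elevates the theorem beyond a formal consequence of Gruenberg's and Mal'cev's theorems.
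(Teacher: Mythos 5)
Your proposal is correct and is essentially the same argument the paper indicates for this theorem: both reduce via the prime and JSJ decompositions, verify residual finiteness on the pieces, and defer the genuine crux --- gluing along the $\bZ^2$ edge groups, where residual finiteness of amalgams can in general fail --- to Hempel's separability/amalgamation result in \cite{He}, exactly as the paper does. The only divergence is in the hyperbolic pieces, which you treat by Mal'cev linearity inside $\mathrm{PSL}_2(\bC)$ (as Hempel originally did), whereas the paper's sketch reaches them from surface groups via extensions (Theorem \ref{t:Mil}) and finite-index supergroups (Lemma \ref{l:sup-sub}), implicitly using the virtual fibering descriptions of Table \ref{table:3geom}.
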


Roughly, Theorem \ref{t:He} follows from the residual finiteness of cyclic and surface groups together with the preservation of residual finiteness under finite-index supergroups (Lemma \ref{l:sup-sub}), extensions (Theorem \ref{t:Mil}), as well as free products (Theorem \ref{t:Gru}) and amalgamated free products over $\bZ^2$~\cite{He} of $3$-manifold groups.

\smallskip

Applying now Hirshon's work we can obtain the proof, which gives us in particular a quick way of showing Theorem \ref{t:Wang}: Let $f\colon M\to M$ be a map of non-zero degree, where $M$ is a closed oriented $3$-manifold. Then $[\pi_1(M):f_*(\pi_1(M))]<\infty$. If the (possibly trivial) prime decomposition of $M$ contains no quotients of $S^3$, then $\pi_1(M)$ is torsion-free, hence $f_*$ is injective by Corollary \ref{c:Hirshon} (1). If $M$ has non-trivial prime decomposition with at least one summand modeled on $S^3$ (clearly except $S^3$), then $f_*$ is injective by Corollary \ref{c:Hirshon} (2).

This finishes the proof of the first part of Theorem \ref{t:main} (and reproves Theorem \ref{t:Wang}).

\subsection{4-manifolds}

First, we indicate that our $4$-manifolds in Theorem \ref{t:main} have residually finite fundamental groups:

\begin{theorem}\label{t:4dRF}
Let $M$ be a closed oriented geometric $4$-manifold or a connected sum of such $4$-manifolds. Then $\pi_1(M)$ is residually finite.
\end{theorem}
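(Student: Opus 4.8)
The plan is to reduce the statement to a single geometric summand, then replace that summand by a convenient finite cover and read off the fundamental group geometry by geometry, so that in each case residual finiteness is either classical or already available from the tools in the excerpt. First I would use van Kampen's theorem: for a connected sum $M_1\#\cdots\#M_k$ of geometric $4$-manifolds, $\pi_1(M)\cong\pi_1(M_1)*\cdots*\pi_1(M_k)$, and since the free product of residually finite groups is residually finite (Theorem~\ref{t:Gru}), it suffices to treat a single closed oriented geometric $4$-manifold. Next, because residual finiteness passes to and from finite-index subgroups (Lemma~\ref{l:sup-sub}), I may freely replace $M$ by any finite cover; in particular I may assume $M$ has the explicit product or bundle form recorded in Table~\ref{table:4geomvirtual}.

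The bulk of the geometries are then handled structurally. For the compact geometries $S^4$, $\mathbb{CP}^2$, $S^2\times S^2$ the group $\pi_1(M)$ is trivial. For $S^2\times\mathbb{H}^2$ a finite cover is an $S^2$-bundle over a hyperbolic surface, whose fundamental group is a surface group since the fibre $S^2$ is simply connected. For every mixed geometry with an $\mathbb{R}$-factor, i.e.\ the $\mathbb{X}^3\times\mathbb{R}$ geometries (including $S^2\times\mathbb{R}^2$, $\mathbb{H}^2\times\mathbb{R}^2$, $\mathbb{H}^3\times\mathbb{R}$, $S^3\times\mathbb{R}$ and $\widetilde{SL_2}\times\mathbb{R}$), a finite cover is a product $P\times S^1$ with $P$ an $\mathbb{X}^3$-manifold, so $\pi_1(M)\cong\pi_1(P)\times\mathbb{Z}$; and the reducible $\mathbb{H}^2\times\mathbb{H}^2$ geometry is finitely covered by a product of two hyperbolic surfaces. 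In all these cases $\pi_1(M)$ is, up to finite index, a direct product of surface groups, closed $3$-manifold groups, finite groups and free abelian groups. Surface groups and $3$-manifold groups are residually finite (the latter by Hempel's Theorem~\ref{t:He}), and a direct product of finitely many residually finite groups is residually finite (immediately, or via the split-extension case of Theorem~\ref{t:Mil}). With Lemma~\ref{l:sup-sub} this settles every geometry in this list.

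Two further families are not covered by an explicit product structure and require external input. The flat, nilpotent and solvable geometries $\mathbb{R}^4$, $Nil^4$, $Nil^3\times\mathbb{R}$, $Sol^3\times\mathbb{R}$, $Sol^4_0$, $Sol^4_1$, $Sol^4_{m\neq n}$ have fundamental groups that are lattices in simply connected solvable Lie groups, hence virtually polycyclic; polycyclic groups are residually finite by the classical theorem of Hirsch, and Lemma~\ref{l:sup-sub} then disposes of these. The genuinely different cases — and the place where I expect the main conceptual step — are the hyperbolic geometries $\mathbb{H}^4$, $\mathbb{H}^2(\mathbb{C})$ and the \emph{irreducible} $\mathbb{H}^2\times\mathbb{H}^2$ geometry, for which no finite cover splits as a product and Tables~\ref{table:3geom}--\ref{table:4geomvirtual} give no help. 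Here $\pi_1(M)$ is a lattice in a semisimple Lie group (respectively $SO(4,1)$, $SU(2,1)$, or $PSL_2(\mathbb{R})\times PSL_2(\mathbb{R})$), hence a finitely generated linear group, and such groups are residually finite by Mal'cev's linearity theorem.

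The main obstacle is precisely this last family: unlike all the other geometries, the hyperbolic and irreducible $\mathbb{H}^2\times\mathbb{H}^2$ lattices cannot be reduced to products of lower-dimensional manifold groups, so the argument must leave the structural/commensurability framework and appeal to the linearity of lattices in semisimple Lie groups. Checking that all three semisimple models indeed embed as finitely generated linear groups, so that Mal'cev's theorem applies uniformly, is the one step that does not follow mechanically from Tables~\ref{table:4geom}--\ref{table:4geomvirtual}.
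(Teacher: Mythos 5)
Your proof is correct, and its skeleton is the same as the paper's: reduce connected sums to free products via van Kampen and Theorem \ref{t:Gru}, pass to the finite covers of Table \ref{table:4geomvirtual} using Lemma \ref{l:sup-sub}, handle the product and bundle geometries with Hempel's theorem (Theorem \ref{t:He}) and closure of residual finiteness under the relevant extensions, and invoke linearity of lattices plus Mal'cev only for $\mathbb{H}^4$, $\mathbb{H}^2(\mathbb{C})$ and irreducible $\mathbb{H}^2\times\mathbb{H}^2$ --- exactly the dichotomy in the paper, which cites \cite[Ch.III, Sec.7]{BH} for those cases. The one genuine divergence is your treatment of the flat, nilpotent and solvable geometries: the paper runs these through Miller's extension theorem (Theorem \ref{t:Mil}) applied to the mapping-torus and $S^1$-bundle descriptions in Table \ref{table:4geomvirtual}, supplemented by \cite[Section 6]{NeIIPP}, whereas you quote Mostow's theorem that lattices in solvable Lie groups are virtually polycyclic together with Hirsch's theorem that polycyclic groups are residually finite. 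Your route imports two extra classical theorems, but it is arguably more robust at precisely the delicate spot: for $Nil^4$ and $Sol^4_1$ the covers are non-trivial $S^1$-bundles over $Nil^3$- and $Sol^3$-manifolds, so the resulting extensions $1\to\mathbb{Z}\to G\to Q\to 1$ are central and need not split, and none of hypotheses (1)--(3) of Theorem \ref{t:Mil} applies verbatim (which is why the paper also points to \cite[Section 6]{NeIIPP}); polycyclicity sidesteps this entirely. Two harmless imprecisions: the relevant semisimple isometry groups are projective groups such as $PO(4,1)$, $PU(2,1)$ and $(PSL_2(\mathbb{R})\times PSL_2(\mathbb{R}))\rtimes\mathbb{Z}_2$ rather than $SO(4,1)$ and $SU(2,1)$, but all of these are linear (e.g.\ via the adjoint representation), so Mal'cev applies as you intend; and for the compact geometries $\pi_1(M)$ is in general finite rather than trivial (e.g.\ a free involution on $S^2\times S^2$), which changes nothing since you have already passed to the covers of Table \ref{table:4geomvirtual} and finite groups are residually finite in any case.
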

\begin{proof}
Suppose first that $M$ is a geometric $4$-manifold. While one could appeal to the linearity of the isometry groups in general, knowing the residual finiteness of the fundamental groups in lower dimensions (see Theorem \ref{t:He} and the comments below that theorem) allows us to apply Theorem \ref{t:Mil} and Lemma \ref{l:sup-sub} to the descriptions of finite coverings given in Table \ref{table:4geomvirtual} to conclude that if $M$ is not hyperbolic or irreducible $\mathbb H^2\times\mathbb H^2$ manifold, then $\pi_1(M)$ is residually finite. For locally symmetric spaces of non-compact type see~\cite[Ch.III, Sec.7]{BH} (which indeed uses linearity). Finally, suppose $M$ is a non-trivial connected sum of geometric $4$-manifolds. If $\pi_1(M)$ is freely indecomposable, then we are in the above situation with the geometric manifolds. Otherwise, we are in the situation of a non-trivial free product of residually finite groups and the claim follows by Theorem \ref{t:Gru}.
\end{proof}

We are now ready to prove Theorem \ref{t:main} in dimension four: Let $M$ be a closed oriented $4$-manifold, which is either geometric or a connected sum of geometric $4$-manifolds. If $f\colon M\to M$ is a map of non-zero degree, then $[\pi_1(M):f_*(\pi_1(M))]<\infty$. Suppose first that $M$ has no $S^3\times\mathbb R$ or $S^2\times\mathbb X^2$ summands in its (possibly trivial) prime decomposition. Then $\pi_1(M)$ is torsion-free (or even trivial if $M$ is a connected sum only of summands $S^2\times S^2$ and $\mathbb{CP}^2$), hence Corollary \ref{c:Hirshon} (1) tells us that $f_*$ is injective. Next, suppose that $M$ contains an $S^3\times \mathbb R$  or an $S^2\times\mathbb X^2$ summand in its (possibly trivial) prime decomposition. If $\pi_1(M)$ is not freely indecomposable, that is, $M$ contains at least another non-trivial summand in its prime decomposition other than $S^2\times S^2$ or $\mathbb{CP}^2$, then $f_*$ is injective by Corollary \ref{c:Hirshon} (2).

\medskip

This completes the proof of Theorem \ref{t:main}.

\section{From injective to isomorphism}\label{s:iso}

In this section, we will show that many times we can pass from $\pi_1$-injective maps to $\pi_1$-isomorphisms, proving in particular Corollary \ref{c:main}. 

\subsection{Multiplicative and functorial invariants}

Having established the $\pi_1$-injectivity for self-maps as in Theorem \ref{t:main}, we would like now to examine whether one could say even more about these endomorphisms, namely, whether they are isomorphisms. Since $[\pi_1(M):f_*(\pi_1(M))]<\infty$ for any self-map of non-zero degree $f\colon M\to M$, the formalization of the latter problem is captured by the following notion:

\begin{definition}\label{d:finite-co-Hopf}
A group $G$ is called {\em finitely co-Hopfian} if every injective endomorphism onto a finite index subgroup of $G$ is an isomorphism.
\end{definition}

The finitely co-Hopf property is a weakening of the {\em co-Hopf property}, where only injectivity is assumed.

The following lemma tells us that the existence of non-zero multiplicative invariants under finite-index subgroups implies the finite co-Hopf property.

\begin{definition}\label{d:multiplicative}
A numerical group isomorphism invariant $\iota$ is called {\em multiplicative under finite-index subgroups} if for any group $G$ and any finite-index subgroup $H\subseteq G$ it holds
$
\iota(H)=[G:H]\iota(G).
$
\end{definition}

\begin{lemma}\label{l:multiplicative}
Let $G$ be a group such that $\iota(G)\neq0$, for some multiplicative invariant $\iota$. Then $G$ is finitely co-Hopfian.
\end{lemma}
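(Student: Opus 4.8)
The plan is to show that an injective endomorphism $\varphi\colon G\to G$ with $[G:\varphi(G)]<\infty$ must be surjective, by tracking the value of the multiplicative invariant $\iota$. The key observation is that since $\iota$ is a group isomorphism invariant and $\varphi$ is injective, the image $\varphi(G)$ is isomorphic to $G$, so $\iota(\varphi(G))=\iota(G)$. On the other hand, $\varphi(G)$ is a finite-index subgroup of $G$, so the multiplicativity property of Definition \ref{d:multiplicative} applies directly.

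First I would set $H=\varphi(G)$ and write $m=[G:H]<\infty$, which is finite by hypothesis. Applying Definition \ref{d:multiplicative} gives
\[
\iota(H)=[G:H]\,\iota(G)=m\,\iota(G).
\]
Since $\varphi$ is injective, it restricts to an isomorphism $G\xrightarrow{\ \cong\ }\varphi(G)=H$, and because $\iota$ is an isomorphism invariant we also have $\iota(H)=\iota(G)$. Combining these two equalities yields $\iota(G)=m\,\iota(G)$, that is,
\[
(m-1)\,\iota(G)=0.
\]
By assumption $\iota(G)\neq0$, so this forces $m-1=0$, i.e.\ $m=[G:\varphi(G)]=1$. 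Hence $\varphi(G)=G$ and $\varphi$ is surjective, so being already injective it is an isomorphism. As this holds for every such $\varphi$, the group $G$ is finitely co-Hopfian in the sense of Definition \ref{d:finite-co-Hopf}.

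I do not expect any genuine obstacle here: the argument is a one-line cancellation once the invariant is correctly transported across the isomorphism $G\cong\varphi(G)$. The only subtlety worth stating carefully is the interplay between the two defining properties of $\iota$ — that it is simultaneously an \emph{isomorphism} invariant (giving $\iota(\varphi(G))=\iota(G)$) and \emph{multiplicative} under finite-index subgroups (giving $\iota(\varphi(G))=[G:\varphi(G)]\,\iota(G)$) — and one must ensure $\iota$ takes values in a setting where the cancellation $(m-1)\iota(G)=0\Rightarrow m=1$ is valid (e.g.\ rational or real values, with $\iota(G)\neq0$). Given the hypotheses as stated, this is immediate.
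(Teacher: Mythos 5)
Your proof is correct and is essentially identical to the paper's: both transport $\iota$ across the isomorphism $G\cong\varphi(G)$, apply multiplicativity to the finite-index subgroup $\varphi(G)$, and cancel using $\iota(G)\neq0$ to force $[G:\varphi(G)]=1$. Your extra remark about the invariant taking values where cancellation is valid is implicit in the paper's term ``numerical invariant,'' so there is no substantive difference.
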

\begin{proof}
Let $\varphi\colon G\to G$ be an injective endomorphism such that $[G:\varphi(G)]<\infty$. Then
\[
\varphi\colon G\to\varphi(G)
\]
is an isomorphism, hence
\[
\iota(G)=\iota(\varphi(G))=[G:\varphi(G)]\iota(G).
\]
Thus $[G:\varphi(G)]=1$ and the lemma follows.
\end{proof}

Two prominent examples of multiplicative invariants $\iota(G)$ are
\begin{itemize}
\item the Euler  characteristic $\chi(G)$ of $G$~\cite[Corollary 5.6]{Br}, or
\item  any $L^2$-Betti number $b_*^{(2)}(G)$ of $G$~\cite[Theorem 4.15 (iii)]{Ka}. 
\end{itemize}
While the former example is more classical, its multiplicativity property follows as well by the latter, since the Euler characteristic is the alternating sum of the $L^2$-Betti numbers~\cite{Lu}.

The above group theoretic approach stems from the topological counterpart of covering spaces and, more generally, functorial invariants of manifolds: 

\begin{definition}\label{d:functorial}
A nonnegative numerical homotopy invariant $I$ is called {\em functorial} if for any map of non-zero degree $f\colon M\to N$ it holds $I(M)\geq|\deg(f)|I(N)$.
\end{definition}

We have the following immediate observation:

\begin{lemma}\label{l:functorial}
Let $I$ be a functorial invariant and $M$ be a closed manifold such that $I(M)$ is non-zero and finite. Then any non-zero degree self-map $f\colon M\to M$ is $\pi_1$-surjective.
\end{lemma}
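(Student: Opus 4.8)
The plan is to exploit the fact that $f$ is a \emph{self}-map, so that the target of the functoriality inequality is $M$ itself, which forces the degree to be forbiddingly small. Concretely, I would apply Definition \ref{d:functorial} directly to $f\colon M\to M$, taking $N=M$. Since $\deg(f)\neq 0$, this is a legitimate non-zero degree map, and functoriality yields the inequality
\[
I(M)\geq|\deg(f)|\,I(M).
\]
This single inequality is the heart of the argument: it relates $I(M)$ to itself scaled by $|\deg(f)|$.

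Next I would use both hypotheses on $I(M)$ to extract the degree constraint. Because $I(M)$ is finite, I may subtract $|\deg(f)|\,I(M)$ (or equivalently divide through by $I(M)$, which is legitimate precisely because $I(M)\neq 0$), obtaining $|\deg(f)|\leq 1$. As $\deg(f)$ is a non-zero integer, this forces $|\deg(f)|=1$. It is worth noting that both hypotheses are genuinely used here: without finiteness one cannot cancel $I(M)$ from the inequality, and without non-vanishing the inequality $0\geq 0$ would be vacuous and impose no constraint on the degree.

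Finally, I would invoke the standard fact, already recorded in the Introduction, that a self-map of degree $\pm 1$ of a closed oriented manifold is $\pi_1$-surjective: the image $f_*(\pi_1(M))$ has index $|\deg(f)|=1$ in $\pi_1(M)$, so $f_*$ is onto. This completes the proof.

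I do not anticipate a substantive obstacle here; the lemma is a short deduction once the functoriality inequality is applied to the self-map. The only point requiring a modicum of care is the bookkeeping of the two hypotheses on $I(M)$ in the cancellation step, which I have flagged above, together with the tacit assumption that $M$ is oriented so that $\deg(f)$ is defined in the sense used throughout the paper.
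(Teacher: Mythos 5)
Your proof is correct and is precisely the ``immediate observation'' the paper has in mind (the paper offers no written proof of Lemma~\ref{l:functorial}): apply the functoriality inequality to the self-map to get $I(M)\geq|\deg(f)|\,I(M)$, cancel $I(M)$ using that it is non-zero and finite to force $|\deg(f)|=1$, and then invoke the standard fact from the Introduction that the index of $f_*(\pi_1(M))$ divides the degree, hence equals $1$.
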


Gromov introduced the notion of functorial invariants with the {\em simplicial volume} being probably among the most well-known examples of functorial invariants, see~\cite{Gr}. In dimension three, Brooks and Goldman introduced the {\em Seifert volume} (as an analogue to the simplicial volume) for $3$-manifolds modeled on the $\widetilde{SL_2}$ geometry~\cite{BG}.

\subsection{Proof of Corollary \ref{c:main} in dimension three}

We need to show that for each of the manifolds that are not listed in Corollary \ref{c:main} (i) every self-map of non-zero degree is a $\pi_1$-isomorphism. We will show the following stronger result at the group level:

\begin{theorem}\label{t:coHopf3}
Let $M$ be a closed oriented $3$-manifold with non-trivial fundamental group, which is not modeled on one of the geometries $\bR^3$, $Nil^3$, $Sol^3$, $\bH^2\times\bR$ or $S^2\times\bR$. If $M$ is prime, then $\pi_1(M)$ is co-Hopfian. If $M$ is not prime, then $\pi_1(M)$ is finitely co-Hopfian.
\end{theorem}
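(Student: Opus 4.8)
The plan is to split $M$ according to whether $\pi_1(M)$ is finite, $M$ is prime aspherical, or $M$ is non-prime, and to isolate in each case a numerical invariant that does the work. When $\pi_1(M)$ is finite and non-trivial, i.e. $M$ is a spherical space form carrying the $S^3$ geometry, there is nothing to do: every injective endomorphism of a finite group is a bijection, so $\pi_1(M)$ is co-Hopfian. Thus the real content lies in the two infinite cases, and the guiding idea is to use a \emph{multiplicative} invariant when $M$ is non-prime and a \emph{functorial} invariant (through Theorem \ref{t:hopf}) when $M$ is prime aspherical.

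For non-prime $M$ I would apply Lemma \ref{l:multiplicative} with the first $L^2$-Betti number $b_1^{(2)}$. Writing $\pi_1(M)=G_1*\cdots*G_r$ as the free product over the prime summands, each $G_i$ is finite, infinite cyclic (from an $S^2\times S^1$ summand), or the group of an aspherical $3$-manifold, so $b_1^{(2)}(G_i)=0$ in every case by the vanishing of $L^2$-Betti numbers of aspherical $3$-manifolds. The free-product formula for the first $L^2$-Betti number (see \cite{Lu}) then gives $b_1^{(2)}(\pi_1 M)=(r-1)-\sum_{i=1}^r|G_i|^{-1}\geq \frac{r-2}{2}$, where $|G_i|^{-1}=0$ when $G_i$ is infinite. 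This is strictly positive unless $r=2$ and $G_1\cong G_2\cong\bZ/2$; but $\bZ/2*\bZ/2=\pi_1(\mathbb{RP}^3\#\mathbb{RP}^3)$ is modeled on $S^2\times\bR$ and is therefore excluded by hypothesis. In all remaining non-prime cases $b_1^{(2)}(\pi_1 M)\neq 0$, and Lemma \ref{l:multiplicative} yields finite co-Hopfianity.

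The heart of the matter is the prime aspherical case, where I want the full co-Hopf property. First I would record a purely topological reduction valid for any closed aspherical manifold: an injective endomorphism $\varphi$ of $\pi_1(M)$ automatically has finite-index image. Indeed, $\varphi(\pi_1 M)\cong\pi_1 M$ determines a cover $\widehat M\to M$, again aspherical with $\pi_1\widehat M\cong\pi_1 M$, hence homotopy equivalent to $M$; comparing top homology, $H_3(\widehat M)\cong H_3(M)\cong\bZ\neq 0$ forces $\widehat M$ to be closed, so the cover is finite. Given this, co-Hopfianity reduces to finite co-Hopfianity, and the latter I would establish by realising the finite-index injective $\varphi$ as an honest self-map: taking the degree-$k$ cover $p\colon\widehat M\to M$ above together with a homotopy equivalence $h\colon M\to\widehat M$ inducing $\varphi$ (both spaces being aspherical), the composite $f=p\circ h\colon M\to M$ satisfies $f_*=\varphi$ and $\deg(f)=\pm k\neq 0$. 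Since $\pi_1(M)$ is finitely generated and residually finite (Theorem \ref{t:He}), it is Hopfian, so Theorem \ref{t:hopf} forces $f$ to be a homotopy equivalence, whence $\varphi=f_*$ is an isomorphism and $k=1$.

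What remains is to feed Theorem \ref{t:hopf} a functorial invariant that is non-zero on a finite cover of each prime aspherical $M$ off the excluded list. For $M$ hyperbolic, and more generally for prime non-geometric $M$ with a hyperbolic JSJ piece, the simplicial volume already gives $\|M\|\neq 0$. For $M$ modeled on $\widetilde{SL_2}$ the simplicial volume vanishes, and I would instead invoke the Seifert volume of Brooks--Goldman, which is non-zero on a finite cover that is an $S^1$-bundle over a hyperbolic base. The hard part will be the remaining non-geometric graph manifolds: here both $M$ and all its covers have zero simplicial volume, so one must exhibit a finite cover of non-zero Seifert volume. This is exactly where the assumption that $M$ is not finitely covered by $\Sigma\times S^1$ or a $T^2$-bundle enters, and it rests on the delicate non-vanishing results for the Seifert volume rather than on the soft residual-finiteness input that sufficed for $\pi_1$-injectivity in Theorem \ref{t:main}.
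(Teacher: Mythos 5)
Your proposal is correct and follows essentially the same route as the paper. The non-prime case is the paper's argument verbatim: the free-product formula for $b_1^{(2)}$ from \cite[Theorem 4.15 (ii)]{Ka} plus Lemma \ref{l:multiplicative}, with $\bZ_2\ast\bZ_2$ ruled out because a closed oriented $3$-manifold with that fundamental group is $\mathbb{R}P^3\#\mathbb{R}P^3$, an $S^2\times\bR$ manifold. (One small simplification: you do not need the Lott--L\"uck vanishing of $b_1^{(2)}$ for aspherical $3$-manifold groups; non-negativity of $L^2$-Betti numbers already gives $b_1^{(2)}(\pi_1(M))\geq (r-1)-\sum_i|G_i|^{-1}$, which is all you use.) In the prime case you use the same invariants in the same subcases as the paper: simplicial volume for hyperbolic manifolds and manifolds with a hyperbolic JSJ piece, Seifert volume for $\widetilde{SL_2}$, and the Derbez--Wang virtually positive Seifert volume \cite{DW,DW1} for the remaining graph manifolds. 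Your finite-index reduction --- the cover $\widehat M$ corresponding to $\varphi(\pi_1(M))$ is aspherical with fundamental group isomorphic to $\pi_1(M)$, hence homotopy equivalent to $M$, so $H_3(\widehat M)\cong\bZ\neq 0$ forces the cover to be finite --- is a clean, self-contained substitute for the paper's citation of \cite[Corollary 7.3]{NeIIPP}, and your map $f=p\circ h$ is exactly the paper's $f=B\varphi$ of degree $\pm[\pi_1(M):\varphi(\pi_1(M))]$.

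The one point you must repair is structural: you invoke Theorem \ref{t:hopf} as a known result, but in the paper Theorem \ref{t:hopf} has no independent proof --- it is established \emph{inside} the proof of Theorem \ref{t:coHopf3}, via the argument modeled on \cite[Lemma 3.1]{DW}: pull the finite cover $\overline M$ with $I(\overline M)\neq 0$ back along $g$ to a cover $\overline M_g$ of index at most $[\pi_1(M):p_*(\pi_1(\overline M))]$; since $\pi_1(M)$ has only finitely many subgroups of each finite index, only finitely many homotopy types of $\overline M_g$ occur, so functoriality bounds $\deg(\bar g)$, the relation $\deg(g)\deg(q)=\deg(p)\deg(\bar g)$ bounds $\deg(g)$, and applying this uniform bound to the iterates of $g$ forces $\deg(g)=\pm 1$. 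There is no circularity in your arrangement, since that argument nowhere uses Theorem \ref{t:coHopf3}; but as written, your proof defers its hardest step to a statement whose only proof in the paper is the very passage you were asked to reconstruct. If your text were adopted, Theorem \ref{t:hopf} would be left unproved, so you should either reproduce the finiteness argument above or state explicitly that you are taking it as an input.
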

\begin{proof}
Suppose first that $M$ is prime and let $\varphi\colon \pi_1(M)\to \pi_1(M)$ be an injective endomorphism. We may also assume that $M$ is not an $S^3$ manifold (otherwise $\pi_1(M)$ is obviously co-Hopfian), hence it is either modeled on one of the geometries  $\mathbb H^3$ or $\widetilde{SL_2}$, or it has non-trivial JSJ-decomposition. This means $M$ is aspherical. Hence, the self-map induced on the classifying space is given up to homotopy by $f:=B\varphi\colon M\to M$ and has finite degree $\deg(f)=[\pi_1(M):\varphi(\pi_1(M))]$; see~\cite[Corollary 7.3]{NeIIPP}. 

If $M$ is modeled on the $\mathbb H^3$ geometry or has a hyperbolic piece in its JSJ-decomposition, then the simplicial volume of $M$ does not vanish~\cite{Gr}.  If $M$ is modeled on the $\widetilde{SL_2}$ geometry,  then it has non-zero Seifert volume~\cite{BG}. Hence, in all cases Lemma \ref{l:functorial} tells us that $\varphi$ is $\pi_1$-surjective as well. 

The only remaining case for $M$ prime is when $M$ has non-trivial JSJ-decomposition and all pieces are Seifert. In that case $M$ has zero simplicial volume and may have zero Seifert volume as well. However, Derbez and Wang~\cite{DW,DW1} proved that $M$ has always a finite covering $\overline M$ with non-zero Seifert volume. We can then show that $M$ has self-maps of absolute degree at most one along the lines of the proof of~\cite[Lemma 3.1]{DW}, with a stronger conclusion in our setting: Suppose $g\colon M\to M$ is a map of non-zero degree and let $p\colon \overline M\to M$ be a finite covering with $I(\overline M)\neq0$, where $I$ is the Seifert volume in the case under consideration. Let $q\colon \overline M_g\to M$ be the finite covering corresponding to $g_*^{-1}p_*(\pi_1(\overline M))$ and $\bar g\colon \overline M_g\to \overline M$ be a lift of $g\circ q$.  This is shown in the following commutative diagram.
$$
\xymatrix{
\overline M_g\ar[d]_{q} \ar[r]^{\bar g}&  \ar[d]^{p}  \overline M\\
M \ar[r]^{g}& M \\
}
$$
In particular, we have
\begin{equation}\label{eq:deg}
\deg(g)\deg(q)=\deg(p)\deg(\bar g).
\end{equation}
Clearly,
\begin{equation}\label{eq:index}
[\pi_1(M):q_*(\pi_1(\overline M_g))]\leq[\pi_1(M):p_*(\pi_1(\overline M))]<\infty, 
\end{equation}
which means that  there are finitely many possibilities for the homotopy type of the covering $\overline M_g$, since $\pi_1(M)$ contains only finitely many subgroups of a fixed index. Now, $\deg(\bar g)$ can take only finitely many values because $I(\overline M)\neq0$. By \eqref{eq:deg}, we deduce that there are only finitely many possibilities for $\deg(g)$, which means that $\deg(g)=\pm1$. This implies that $g$ is $\pi_1$-surjective, and thus $\varphi$ is surjective (here, $\varphi=g_*$).

Note that, since our space $M$ is aspherical the above shows moreover that $g$ is a homotopy equivalence and that $\overline M_g$ is homotopy equivalent to $\overline M$, because then $\deg(p)=\deg(q)$ and $\bar g$ is a $\pi_1$-isomorphism. Hence, more generally, we obtain the following positive answer to the Strong version of the Hopf Problem (see Problem \ref{p:sHopf}):

\begin{theorem}[Theorem \ref{t:hopf}]\label{t:hopfagain}
Let $M$ be a closed aspherical manifold with Hopfian fundamental group, which has a finite cover $\overline M$ with $I(\overline M)\neq0$, where $I$ is a functorial numerical invariant. Then any self-map $g\colon M\to M$ of non-zero degree is a homotopy equivalence. In particular, $g$ lifts to a homotopy equivalence $\bar g\colon \overline M\to \overline M$.
\end{theorem}

\begin{remark}
We point out that the above argument about the $\pi_1$-surjectivity of $g$ does not use that $\pi_1(M)$ is Hopfian. In fact, \cite[Lemma 3.1]{DW} shows that for any manifold $W$ and any map $g\colon W\to M$, there any only finitely many possibilities for $\deg(g)$. We also note that no example of a closed oriented aspherical manifold with non-Hopfian fundamental group is known.
\end{remark}

Finally, we are left with the case where $M$ is not prime. Note that the prime summands of $M$ can be any closed $3$-manifolds with non-trivial fundamental group, excluding the case $M_1=M_2=\mathbb RP^3$, since $M$ is not finitely covered by $S^2\times S^1$ (see Table \ref{table:3geom}). Thus, the fundamental group of $M$ is given by 
\[
\pi_1(M)=\pi_1(M_1)\ast\pi_1(M_2), 
\]
where $|\pi_1(M_i)|>2$ for at least one of the two $M_i$ (clearly, such a group is never co-Hopfian). By~\cite[Theorem 4.15 (ii)]{Ka}, the first $L^2$-Betti number of $\pi_1(M)$ is given by
\[
b_1^{(2)}(\pi_1(M))=1-\frac{1}{|\pi_1(M_1)|}-\frac{1}{|\pi_1(M_2)|}+b_1^{(2)}(\pi_1(M_1))+b_1^{(2)}(\pi_1(M_2)).
\]
Since $|\pi_1(M_i)|>2$ for at least one of the two $M_i$, we conclude that $b_1^{(2)}(\pi_1(M))\neq0$ and the claim follows by Lemma \ref{l:multiplicative}.
\end{proof}

Corollary~\ref{c:main} for $3$-manifolds follows from Theorems \ref{t:main} and \ref{t:coHopf3}.

\subsection{Proof of Corollary \ref{c:main} in dimension four}

As in the case of $3$-manifolds, we will show that for each of the $4$-manifolds that are not listed in Corollary \ref{c:main} (ii) every self-map of non-zero degree is a $\pi_1$-isomorphism. Again, we show a stronger group theoretic result:

\begin{theorem}\label{t:coHopf4}
Let $M$ be a closed oriented $4$-manifold which is a (possibly trivial) connected sum of geometric $4$-manifolds so that $\pi_1(M)$ is infinite and not isomorphic to $\mathbb Z_2\ast\mathbb Z_2$ or to the fundamental group of a $4$-manifold modeled on one of the geometries $\mathbb X^3\times \bR$, $Sol^4_0$, $Sol^4_1$, $Sol^4_{m\neq n}$ or $Nil^4$. If $\pi_1(M)$ is isomorphic to the fundamental group of an aspherical $4$-manifold, then $\pi_1(M)$ is co-Hopfian. Otherwise, $\pi_1(M)$ is finitely co-Hopfian.
\end{theorem}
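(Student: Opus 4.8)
The plan is to run the same dichotomy as in Theorem~\ref{t:coHopf3}, now powered by the multiplicative invariants $\chi$ and $b_1^{(2)}$ in place of the simplicial and Seifert volumes, since in dimension four the Euler characteristic of the relevant aspherical geometries no longer vanishes. Discarding the simply-connected summands $S^4$, $\mathbb{CP}^2$ and $S^2\times S^2$ from the prime decomposition of $M$, van Kampen gives $\pi_1(M)=\pi_1(M_1)\ast\cdots\ast\pi_1(M_k)$ with each $M_i$ a geometric $4$-manifold of non-trivial fundamental group. The structural remark that drives everything is that a closed aspherical $4$-manifold group is a Poincar\'e duality group of dimension $4$, hence one-ended and in particular freely indecomposable; this pins the co-Hopfian half of the statement to the case $k=1$ with $M_1$ aspherical, and splits the finitely co-Hopfian half into the free-product case $k\geq2$ and a single non-aspherical family with $k=1$.

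For the co-Hopfian half, suppose $\pi_1(M)$ is isomorphic to an aspherical $4$-manifold group. One-endedness forces $k=1$, so $\pi_1(M)=\pi_1(M_1)$ with $M_1$ aspherical and geometric; scanning the aspherical $4$-geometries and deleting the excluded $\mathbb X^3\times\bR$, $Nil^4$, $Sol^4_0$, $Sol^4_1$ and $Sol^4_{m\neq n}$ leaves precisely $\mathbb{H}^4$, $\mathbb{H}^2(\mathbb{C})$ and $\mathbb{H}^2\times\mathbb{H}^2$, for each of which $\chi(M_1)\neq0$. Given an injective endomorphism $\varphi$ of $\pi_1(M_1)$, its image $\varphi(\pi_1(M_1))\cong\pi_1(M_1)$ is again a Poincar\'e duality group of dimension $4$ and hence of finite index in $\pi_1(M_1)$; as in Theorem~\ref{t:coHopf3}, \cite[Corollary 7.3]{NeIIPP} then realises $\varphi$ by a self-map of $M_1$ of degree $[\pi_1(M_1):\varphi(\pi_1(M_1))]$. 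Since $\chi(\pi_1(M_1))=\chi(M_1)\neq0$ is multiplicative, Lemma~\ref{l:multiplicative} makes $\pi_1(M_1)$ finitely co-Hopfian, so the finite-index embedding $\varphi$ is onto. As $\varphi$ was arbitrary, $\pi_1(M)$ is co-Hopfian.

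For the finitely co-Hopfian half, assume $\pi_1(M)$ is not an aspherical $4$-manifold group. If $k\geq2$, then $\pi_1(M)$ is a non-trivial free product and, iterating \cite[Theorem 4.15 (ii)]{Ka}, I would record
\[
b_1^{(2)}(\pi_1(M))=(k-1)-\sum_{i=1}^{k}\frac{1}{|\pi_1(M_i)|}+\sum_{i=1}^{k}b_1^{(2)}(\pi_1(M_i)),
\]
with the convention $1/|\pi_1(M_i)|=0$ when $\pi_1(M_i)$ is infinite. As $|\pi_1(M_i)|\geq2$ and every $b_1^{(2)}\geq0$, the right-hand side is at least $k/2-1$, and an elementary check shows it vanishes only for $k=2$ with both factors of order $2$, that is for $\pi_1(M)\cong\mathbb{Z}_2\ast\mathbb{Z}_2$; as this is excluded, $b_1^{(2)}(\pi_1(M))\neq0$ and Lemma~\ref{l:multiplicative} applies. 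If instead $k=1$, then $\pi_1(M)=\pi_1(M_1)$ with $M_1$ geometric of infinite fundamental group; since $\pi_1(M)$ is by hypothesis not an aspherical $4$-manifold group, $M_1$ is non-aspherical, and the only non-aspherical geometry of infinite fundamental group surviving the exclusions is $S^2\times\mathbb{H}^2$. Its fundamental group is virtually a hyperbolic surface group, so $\chi(\pi_1(M_1))\neq0$, and Lemma~\ref{l:multiplicative} again yields finite co-Hopfianness.

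The crux, I expect, is the finite-index claim used in the co-Hopfian half. A priori an injective self-embedding of $\pi_1(M_1)$ could have infinite-index image, in which case the associated self-map has degree zero and the multiplicativity of $\chi$ yields no information. What forbids this is precisely the asphericity input of \cite[Corollary 7.3]{NeIIPP}, equivalently the fact that an infinite-index subgroup of a Poincar\'e duality group of dimension $n$ has cohomological dimension strictly less than $n$: an isomorphic copy of $\pi_1(M_1)$ inside itself must therefore be of finite index. This is exactly what promotes the finitely co-Hopfian conclusion of Lemma~\ref{l:multiplicative} to genuine co-Hopfianness. The remaining steps---the classification bookkeeping isolating the three aspherical survivors together with $S^2\times\mathbb{H}^2$, and the elementary analysis of when the free-product first $L^2$-Betti number vanishes---are routine.
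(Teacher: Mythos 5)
Your proof is correct and follows essentially the same route as the paper's: Strebel's theorem on infinite-index subgroups of Poincar\'e duality groups upgrades finite co-Hopfianity to co-Hopfianity in the aspherical case, and non-vanishing multiplicative invariants ($\chi$ for the $\mathbb{H}^4$, $\mathbb{H}^2(\mathbb{C})$, $\mathbb{H}^2\times\mathbb{H}^2$ and $S^2\times\mathbb{H}^2$ cases, $b_1^{(2)}$ for free products) feed Lemma~\ref{l:multiplicative} exactly as in the paper. The only differences are cosmetic: the paper leads with the simplicial volume (noting the Euler characteristic and second $L^2$-Betti number as alternatives) where you lead with $\chi$, and you spell out the free-indecomposability bookkeeping that the paper leaves implicit.
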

\begin{proof}
Suppose first that $\pi_1(M)$ is isomorphic to the fundamental group of a geometric, aspherical $4$-manifold $E$ and let $\varphi\colon \pi_1(M)\to \pi_1(M)$ be an injective endomorphism. According to Table \ref{table:4geom}, $E$ is modeled on one of the aspherical geometries  
$$\mathbb H^4, \ \mathbb H^2(\mathbb C), \ \mathbb H^2\times\mathbb H^2.$$ The homotopy type of the self-map induced by $\varphi$ on the classifying space $E\simeq B\pi_1(M)$ is given by $f:=B\varphi\colon E\to E$ and has finite degree $\deg(f)=[\pi_1(M):\varphi(\pi_1(M))]$ (see~\cite[Corollary 7.3]{NeIIPP}). For each of the above three geometries, the simplicial volume of $E$ does not vanish~\cite{Gr,Bu}, hence Lemma \ref{l:functorial} tells us that $f$ is $\pi_1$-surjective as well, i.e., $\varphi$ is surjective. 
Note that here one can use as well that $\pi_1(M)$ has non-zero Euler characteristic or non-zero second $L^2$-Betti number~\cite[Theorem 4.3]{Lu94}. Thus $\pi_1(M)$ is finitely co-Hopfian by Lemma \ref{l:multiplicative}. Now, if $\varphi\colon\pi_1(M)\to\pi_1(M)$ is injective, then $\varphi(\pi_1(M))$ is isomorphic to $\pi_1(M)$, thus they have the same cohomological dimension. A theorem of Strebel~\cite{St} then tells us that $[\pi_1(M):\varphi(\pi_1(M))]<\infty$ and we conclude that $\varphi$ is an isomorphism, that is, $\pi_1(M)$ is co-Hopfian.

The remaining cases according to Table \ref{table:4geom} are when $\pi_1(M)$ is isomorphic to the fundamental group of a $4$-manifold modeled on $S^2\times\mathbb H^2$ or $\pi_1(M)$ is a non-trivial free product. Assume first that $\pi_1(M)$ is the fundamental group of a $4$-manifold modeled on $S^2\times\mathbb H^2$. According to Table \ref{table:4geomvirtual} each $S^2\times\mathbb H^2$ manifold is finitely covered by an $S^2$-bundle over a hyperbolic surface $\Sigma$. The homotopy long-exact sequence for this $S^2$-bundle tells us that $\pi_1(M)$ contains $\pi_1(\Sigma)$ as a subgroup of finite index. In particular, it has non-zero Euler characteristic (or first $L^2$-Betti number). Hence $\pi_1(M)$ is finitely co-Hopfian by Lemma \ref{l:multiplicative}.

Finally, suppose $\pi_1(M)$ is a non-trivial free product, that is, we can write $M=M_1\#M_2$, where both $\pi_1(M_i)$ are not trivial. 
Since $M$ is oriented and $\pi_1(M)$ is not isomorphic to $\mathbb Z_2\ast\mathbb Z_2$, we have that $|\pi_1(M_i)|>2$ for at least one $M_i$. By~\cite[Theorem 4.15 (ii)]{Ka}, the first $L^2$-Betti number of $\pi_1(M)$ is given by
\[
b_1^{(2)}(\pi_1(M))=1-\frac{1}{|\pi_1(M_1)|}-\frac{1}{|\pi_1(M_2)|}+b_1^{(2)}(\pi_1(M_1))+b_1^{(2)}(\pi_1(M_2)).
\]
We conclude that $b_1^{(2)}(\pi_1(M))\neq0$ and the claim follows by Lemma \ref{l:multiplicative}.
\end{proof}

Corollary~\ref{c:main} for $4$-manifolds follows from Theorems \ref{t:main} and \ref{t:coHopf4}. 

\medskip

This completes the proof of Corollary~\ref{c:main}.

\end{document}